\documentclass[11pt, reqno]{amsart}

\usepackage{bm}
\setlength{\textheight}{8.00in} \setlength{\oddsidemargin}{0.0in}
\setlength{\evensidemargin}{0.0in} \setlength{\textwidth}{6.4in}
\setlength{\topmargin}{0.18in} \setlength{\headheight}{0.18in}
\setlength{\marginparwidth}{1.0in}
\setlength{\abovedisplayskip}{0.2in}

\setlength{\belowdisplayskip}{0.2in}

\setlength{\parskip}{0.05in}
\usepackage{multirow}
\usepackage{tikz}
\setlength\intextsep{3pt}
\usepackage{caption}
\usetikzlibrary{
 decorations.pathmorphing,%
 decorations.markings,%
 decorations.pathreplacing,%
 decorations.text,%
 calc,%
 patterns,%
 shapes.geometric,%
 arrows,
 decorations.shapes,
 positioning,
 plotmarks,
 shadings,
 math,
 intersections
}

\definecolor{jeffColor}{RGB}{102, 0, 204}
\definecolor{yaizaColor}{RGB}{0, 153, 153}
\definecolor{certainty}{RGB}{64, 228, 198}
\definecolor{hope}{RGB}{228, 194, 64}

\definecolor{periodColor}{RGB}{255, 167, 105}
\definecolor{dark-green}{RGB}{135, 194, 130}
\tikzset{>=latex} 
\tikzset{font=\small}
\tikzset{mark size=1.5pt, mark options=thin}
\tikzset{pin distance=4pt,
 every pin edge/.style={<-, thin, shorten <= -2pt}}
\usepackage{array,booktabs,tabularx}
\usepackage{graphicx}
\usepackage{amssymb}
\usepackage{enumerate}
\usepackage{color}
\usepackage{esint}
\usepackage{mathtools}
\usepackage{dsfont}
\usepackage{ esint }
\usepackage{placeins}
\usepackage[hide]{ed}
\usepackage{mathrsfs}
\usepackage{changepage}   
\usepackage{comment}
\usepackage{enumitem}
\usepackage{soul}
\usepackage{needspace}
\usepackage[normalem]{ulem}

\usepackage{multicol}
\usepackage{tabto}
\usepackage{hyperref}

\definecolor{uipoppy}{RGB}{221,128,71}
\definecolor{uipaleblue2}{RGB}{179,196,215}
\definecolor{uiviolet}{RGB}{86,86,99}
\definecolor{uiblack}{RGB}{0, 0, 0}
\definecolor{azul}{RGB}{0,128,255}
\definecolor{verde}{RGB}{50,180,50}
\definecolor{pale-verde}{RGB}{155,207,145}
\definecolor{uipaleblue}{RGB}{108,199,220}

\newtheorem{lemma}{Lemma}
\newtheorem{theorem}{Theorem}

\theoremstyle{definition}

\usepackage{dsfont}

\newcommand{\R}{{\mathbb R}}

\newcommand{\C}{{\mathbb C}}

\newcommand{\norm}[1]{\left\lVert#1\right\rVert}



\newcommand{\dell}{\ensuremath{\partial}}

\def\XXint#1#2#3{{\setbox0=\hbox{$#1{#2#3}{\int}$} \vcenter{\hbox{$#2#3$}}\kern-.5\wd0}}

\DeclareMathOperator{\supp}{supp}

\numberwithin{equation}{section}
\numberwithin{lemma}{section}



\newcommand*\Laplace{\mathop{}\!\mathbin\Delta}
\newcommand{\Lp}[2]{L^{#1}(#2)}
\newcommand{\Lpcomp}[2]{L^{#1}_{\text{comp}}(#2)}
\newcommand*\rfrac[2]{\frac{#1}{#2}}
\newcommand{\ang}[1]{\langle #1 \rangle}
\newcommand{\ind}[1]{\mathbf{1}_{#1}}
\newcommand*\Mellin[1]{\mathop{}\!\mathbin\mathcal{M}(#1)}
\renewcommand{\Im}{\operatorname{Im}}
\renewcommand{\Re}{\operatorname{Re}}
\newcommand{\Pconj}{P_{\varphi, E, \varepsilon}^{\pm}(h)}
\renewcommand{\S}{\mathbb{S}}
\newcommand{\N}{\mathbb{N}}
\newcommand{\Res}[2]{\operatorname{Res}_{#2}\left(#1\right)}
\newcommand*\Mellinverse[2]{\mathop{}\!\mathbin\mathcal{M}_{#1}^{-1}(#2)}
\newcommand{\TestFn}[1]{C_{c}^{\infty}(#1)}
\newcommand{\Ann}[2]{A(#1,#2)}

\title{Resolvent Bounds For Lipschitz Potentials In Dimension Two And Higher With Singularities At The Origin}
\author{Donnell Obovu}
\date{}
\address{Department of Mathematics, University College London, London, UK}
\email{}
\pagestyle{plain}

\begin{document}
\maketitle
\begin{abstract}
We consider, for $h,E>0$, the semiclassical Schrödinger operator $-h^2\Delta + V - E$ in dimension two and higher. The potential $V$, and its radial derivative $\dell_{r}V$ are bounded away from the origin, have long-range decay and $V$ is bounded by $r^{-\delta}$ near the origin while $\dell_{r}V$ is bounded by $r^{-1-\delta}$, where $0\leq\delta\leq 4(\sqrt{2}-1)$. In this setting, we show that the resolvent bound is exponential in $h^{-1}$, while the exterior resolvent bound is linear in $h^{-1}$.
\end{abstract}

\section{Introduction}
Let $P$ denote the semiclassical Schrödinger operator on $\Lp{2}{\R^n}$, for $n\geq 2$, defined by
\begin{align}
\label{SchrodingerOperator}
P = P(h) := -h^2\Laplace + V : \Lp{2}{\R^n}\to\Lp{2}{\R^n},\qquad &h > 0,
\end{align}
where $\Laplace$ is the Laplacian on $\R^n$ and $V:\R^n\to\R$ is the potential. Unless otherwise stated, we will be working with polar coordinates $(r,\theta)=(|x|,\rfrac{x}{|x|})\in (0,\infty)\times\S^{n-1}$ to represent a point $x\in\R^n$. For a function $f$ defined on some subset of $\R^n$, we use the notation $f(r,\theta) := f(r\theta)$ and denote the partial derivative with respect to the radial variable by $f’ = \dell_{r}f$.

The potential $V$ must satisfy
\begin{gather}\label{VCond1}
V\in\Lp{p}{\R^n}+\Lp{\infty}{\R^n},\\\label{VCond2}
|V(r,\theta)|\ind{r<1}\leq c_1r^{-\delta},\\\label{VCond3}
|V(r,\theta)|\ind{r\geq 1}\leq y(r)\text{ for all } (r,\theta)\in(0,\infty)\times\S^{n-1}.
\end{gather}
Here, $p\geq 2$, $p>\frac{n}{2}$, $0\leq\delta <4(\sqrt{2}-1)$, $c_1>0$ and the function $y$ is non-negative, bounded, and decreases to zero as $r\to\infty$. We also require the distributional derivative of $V$ with respect to the radial coordinate $r$, $V'$ to exist and meet the criteria that
\begin{gather}\label{VCond4}
V'\in L^1_{\text{loc}}(\R^n\setminus\lbrace 0 \rbrace),\\\label{VCond5}
|V'(r,\theta)|\ind{0<r<1}\leq c_1r^{-1-\delta},\\\label{VCond6} |V'(r,\theta)|\ind{r>1}\leq c_0r^{-1}m(r)
\end{gather}
for some $c_0>0$ and a function $m:(0,\infty)\to [0,1]$ with
\begin{equation} \label{mCondition}
\lim_{r\to\infty}m(r)=0, \qquad (r+1)^{-1}m(r)\in\Lp{1}{0,\infty}.
\end{equation}
The typical examples of $m$, as shown in \cite{LongRangeLipschitz}, are $(1+r)^{-\rho}$ or $\log^{-1-\rho}(e+r)$ for $\rho> 0$.

The operator $P(h)$, with the potential $V$ satisfying \eqref{VCond1} to \eqref{VCond6}, is self-adjoint with respect to the domain $\mathcal{D}(P) = H^2(\R^n)$ \cite{FeynmannIntegrals}.

The main theorem of this paper is
\begin{theorem}\label{MainTheorem}
Fix $E>0$ and $s>\rfrac{1}{2}$. Suppose $V:\R^n\to\R$ satisfies conditions \eqref{VCond1} to \eqref{VCond6}. Then there exists $M = M(E,y,c_0,c_1,\delta,m), C_2 = C_2(E,y,c_0,c_1,\delta,m), C_3 = C_3(E,y,c_0,c_1,\delta,m) >0$ and $h_0\in(0,1]$ so that for all $\varepsilon > 0$ and $h\in (0,h_0]$,
\begin{gather}
\norm{\ang{x}^{-s}\left(P(h)-E\pm i\varepsilon\right)^{-1}\ang{x}^{-s}}_{\Lp{2}{\R^n}\to\Lp{2}{\R^n}}\leq e^{\rfrac{C_3}{h}},\label{GeneralFirstEstimate}
\intertext{and}
\norm{\ang{x}^{-s}\ind{|x|\geq M}\left(P(h)-E\pm i\varepsilon\right)^{-1}\ind{|x|\geq M}\ang{x}^{-s}}_{\Lp{2}{\R^n}\to\Lp{2}{\R^n}}\leq\frac{C_2}{h}.\label{GeneralSecondEstimate}
\end{gather}
where $\ang{x} := \ang{r} := (1+r^2)^{\frac{1}{2}}$.

Moreover, if $\supp V\subset B(0,R_0)$, then one can take $M = C_1(y,c_0,c_1,\delta,R_0)E^{-\rfrac{1}{2}}$.
\end{theorem}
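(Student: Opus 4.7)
My plan is to combine the commutator/multiplier method of \cite{LongRangeLipschitz} for the exterior region with an interior Carleman estimate to handle the singularity at the origin. Decomposing $u=\sum_{\ell\geq 0} u_\ell(r) Y_\ell(\theta)$ in spherical harmonics separates the angular Laplacian, so for each $\ell$ the radial profile satisfies a one-dimensional Schrödinger equation on $(0,\infty)$ with effective potential $V_\ell(r)+h^2\nu_\ell/r^2$, where $\nu_\ell=\ell(\ell+n-2)$. Since the weights $\ang{x}^{-s}$ and the indicator $\ind{|x|\geq M}$ act diagonally in this basis, it suffices to obtain uniform-in-$\ell$ weighted $L^2(r^{n-1}\,dr)$ bounds on each radial resolvent and then reassemble by Parseval.

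For the exterior bound \eqref{GeneralSecondEstimate}, I would pair the radial equation with a first-order multiplier $A(r)u_\ell' + B(r) u_\ell$ on $\{r\geq M\}$. Integration by parts produces a positive quadratic form in $u_\ell'$ and $u_\ell/r$ together with error terms controlled by the long-range decay hypotheses \eqref{VCond3}, \eqref{VCond6}, \eqref{mCondition}; for $M$ chosen large enough (depending only on $E$, $y$, $c_0$, $m$) these errors are absorbable and no exponential loss is incurred. For the global bound \eqref{GeneralFirstEstimate} I would superimpose a Carleman weight $e^{\varphi(r)/h}$ with $\varphi$ convex and supported in the interior, which contributes the factor $e^{C_3/h}$ and, via its commutator with $-h^2\Delta$, adds a positive term proportional to $(\varphi')^2|u_\ell|^2/h^2$ to the energy identity. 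The $\pm i\varepsilon$ perturbation contributes only a definite-sign imaginary part and presents no obstacle.

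The main obstacle is the interior analysis near $r=0$, where the singular terms $|V|\leq c_1 r^{-\delta}$ and $|V'|\leq c_1 r^{-1-\delta}$ must be absorbed into the positive quadratic form generated by the multiplier and by the centrifugal term $h^2\nu_\ell/r^2$. Applying a Hardy-type inequality to dominate $\int|u_\ell|^2 r^{-2}\,dr$ by the kinetic and centrifugal energies uniformly in $\ell$, the bound on $V$ yields a term of the form $c_1 C_H\int r^{-\delta}|u_\ell|^2$, while the cross term involving $V'$ is handled by Cauchy--Schwarz producing $c_1\int r^{-1-\delta}|u_\ell||u_\ell'|$. I expect the numerical threshold $\delta<4(\sqrt 2-1)$ to emerge precisely as the sharp constraint under which the resulting commutator stays pointwise nonnegative, independently of $\ell$ and of $r$, after these absorptions are carried out with explicit constants. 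Verifying this positivity is the delicate algebraic step; once it is settled, Parseval lifts the uniform 1D bounds to $L^2(\R^n)$, and in the compactly supported case the equation outside $B(0,R_0)$ reduces to the free Helmholtz problem $(-h^2\Delta-E)u=0$, which is scale-invariant under $r\mapsto E^{1/2}r$, yielding $M=C_1 E^{-1/2}$.
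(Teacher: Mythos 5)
Your exterior analysis and the scaling that gives $M=C_1E^{-1/2}$ in the compactly supported case are sound, and decomposing in spherical harmonics is a reasonable alternative to the paper's device of working with the operator $\Lambda=-\Delta_{\S^{n-1}}+\tfrac{(n-1)(n-3)}{4}$ directly (the two are equivalent since $\Lambda$ is diagonal in that basis). However, the interior part of your proposal has a genuine gap, and it is precisely the part the problem is about.

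The difficulty is that in dimension $n=2$ the Hardy inequality has no positive constant, and worse: after the standard conjugation by $r^{(n-1)/2}$ the $\ell=0$ mode carries an \emph{attractive} effective potential $-\tfrac{h^2}{4r^2}$ (this is the negative eigenvalue $\lambda_0=-\tfrac14$ of $\Lambda$ that the paper highlights as the obstruction in $n=2$). So there is no centrifugal/kinetic barrier against which to absorb $\int r^{-\delta}|u_0|^2$ or the $V'$ cross terms uniformly in $h$, and your proposed ``Hardy-type inequality'' step simply cannot be carried out for that mode. Even in $n\geq 3$ the Hardy route does not produce an $h$-independent absorption, because the kinetic term carries a factor $h^2$. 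What actually absorbs the singular $V\sim r^{-\delta}$, $V'\sim r^{-1-\delta}$ is not Hardy but the Carleman phase itself: the paper takes $\varphi'(r)=Kr^{-\delta/2}$ for $r\leq 1$, so the conjugation injects a \emph{repulsive} term $(\varphi')^2=K^2 r^{-\delta}$ into the effective potential, and with weight $w\sim r^2$ (so that $q=2r^{-1}-w'/w=0$ near $0$, killing the dangerous $\Lambda$-term) the positivity reduces to $16-8\delta-(1+\eta)\delta^2>0$; letting $\eta\to 0$ this is exactly $\delta<4(\sqrt 2-1)$. Your proposal treats $\varphi$ as a generic convex Carleman weight and does not give it this singular profile near $r=0$, so the threshold you ``expect to emerge'' would not in fact emerge from the computation you describe.

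There is a second, independent gap. With $w\sim r^2$ the Carleman estimate yields control of $\int w'(|u|^2+|hu'|^2)$, but $w'\sim r$ vanishes at the origin, so this does \emph{not} give the desired bound on $\int\langle r\rangle^{-2s}|u|^2$ near $r=0$. The paper closes this gap with a separate near-origin estimate (Lemma~\ref{lemNearOriginEstimate}) obtained via the Mellin transform and the holomorphy/residue structure of $(\sigma^2-i\sigma+\Lambda)^{-1}$ (Lemma~\ref{DecomposeU}); one cannot simply take a weight with $w(0)>0$ instead, because then $q<0$ near the origin and the $\Lambda$-term changes sign the wrong way. Your write-up contains no analogue of this near-origin control, and without it the interior Carleman inequality does not upgrade to \eqref{1stCarleman}. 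In short: replace the Hardy mechanism by the singular phase $\varphi'=Kr^{-\delta/2}$ together with $w\sim r^2$, and supply a separate estimate for $\|r^{-1/2-t_0}u\|_{L^2}$ on a small annulus around the origin (the paper does this via the Mellin transform); with those two pieces in place the rest of your outline assembles correctly.
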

The interest in resolvent estimates of the form \eqref{GeneralFirstEstimate} can be traced back to Burq \cite{Bu98}, who showed \eqref{GeneralFirstEstimate} in the case of smooth, compactly supported potentials. The resolvent estimates were then used to estimate the rate of decay of the local energy of the wave equation when an obstacle was present in the domain. Following works by Vodev, Burq and Cardoso and Vodev \cite{Vo00, Bu02, CV02} expanded on the resolvent estimates found in \cite{Bu98}, with \cite{Vo00} generalising the estimate to a class of noncompactly supported potentials, \cite{Bu02} extends the estimates in \cite{Bu98} to the case of smooth, long-range potentials, and Cardoso and Vodev \cite{CV02} refine Burq’s work \cite{Bu02} to give exterior estimates of the form seen in \eqref{GeneralSecondEstimate}.

Datchev’s work \cite{Da14} provided resolvent estimates \eqref{GeneralFirstEstimate} and \eqref{GeneralSecondEstimate} for potentials with weaker assumptions placed on their regularity in dimensions $n\neq 2$. The only requirement was that the potential $V$  and its radial derivative $\dell_{r}V$ be bounded and satisfy certain decay estimates. Datchev used an energy functional to prove global Carleman estimates, a technique which features in later works on resolvent estimates, including this paper. Further works that give resolvent estimates with little regularity assumed are \cite{Vo14, RoTa15, DadeH16, KlVo19, Vo19, Sh20, Vo20a, Vo20b, Vo20c, Vo20d, Vo22}.

Of particular importance to this paper is Shapiro's work \cite{Dim2}, which shows \eqref{GeneralFirstEstimate} and \eqref{GeneralSecondEstimate} for long-range potentials $V$ in two dimensions, expanding on \cite{Da14} to include the two dimensional case. However, \cite{Dim2} requires decay conditions on the whole gradient $\nabla V$ while \cite{Da14} only requires a derivative in the radial variable. This paper extends the results of \cite{Da14} to the two dimensional case, in particular, requiring only assumptions on $\dell_{r}V$, rather than $\nabla V$.

This paper is a continuation of the joint work of Galkowski and Shapiro \cite{LongRangeLipschitz}, who gave a proof for resolvent estimates in the case of potentials that are unbounded at the origin with long-range decay. We prove resolvent estimates for potentials with singularities at the origin in two dimensions and higher, extending the results of \cite{LongRangeLipschitz} to a class of potentials with greater growth at the origin. Notably, the resolvent estimates are true in the case of Coulomb potentials in three dimensions. We will see that, in order to handle singularities in a soon-to-be-defined energy functional at the origin, we require similar methods to those used by Galkowski and Shapiro in \cite{LongRangeLipschitz} with the added use of the Mellin transform inspired by \cite{CompactRPotentials}. In \cite{CompactRPotentials}, these Mellin transform methods were used to prove resolvent estimates in the case of compactly supported radial potentials that are only $L^\infty$. Here, we use these methods to handle large singularities as well as the case of dimension two simultaneously.


The approach taken to prove Theorem \ref{MainTheorem} involves defining the conjugated operator
\begin{align}\label{GeneralPConj}
\Pconj &:= e^{\rfrac{\varphi}{h}}r^{\rfrac{n-1}{2}}(P(h)-E\pm i\varepsilon )r^{-\rfrac{n-1}{2}}e^{-\rfrac{\varphi}{h}}\\\label{GeneralPConj2}
&\ =-h^2\dell_{r}^2+h^2r^{-2}\Lambda + 2h\varphi '\dell_{r} + V -(\varphi ')^2 + h\varphi '' - E\pm i\varepsilon,
\end{align}
where
\begin{gather}\label{GeneralLambdaOperator}
\Lambda := -\Laplace_{\S^{n-1}}+\frac{(n-1)(n-3)}{4},
\end{gather}
the phase function $\varphi$ is absolutely continuous and defined on $[0,\infty)$ with $\varphi\geq 0$, $\varphi (0) = 0$ and $\varphi ' \geq 0$ and $\Laplace_{\S^{n-1}}$ is the Laplace-Beltrami operator on $\S^{n-1}$. Throughout this paper, integrations are carried out with respect to the measure $drd\theta$ and the Lebesgue measure on $\R^n$. To avoid confusion, we will distinguish between $[0,\infty)\times\S^{n-1}$ and $\R^n$, with integration that takes place on subsets of $[0,\infty)\times\S^{n-1}$ being done with respect to $drd\theta$ and integration that takes place on $\R^n$ being done with respect to the Lebesgue measure. 

We now define an energy functional
\begin{gather}\label{EnergyFunctional}
F(r) = F[u](r) := \norm{u'(r,\cdot)}^2_{\Lp{2}{\S^{n-1}_{\theta}}} - \ang{(h^2r^{-2}\Lambda + V - (\varphi ')^2 - E)u(r,\cdot) , u(r,\cdot)}_{\Lp{2}{\S^{n-1}_{\theta}}},
\end{gather}
for $u\in r^{\frac{n-1}{2}}\TestFn{\R^n}$ when $n\geq 2$. For a weight function $w\in C^0[0,\infty)$ that is piecewise $C^1$, the distribution $(wF)'$ on $(0,\infty)$ is
\begin{multline}\label{wFderivative}
(wF)'= -2w\Re\ang{\Pconj u, u'}\mp2\varepsilon w\Im\ang{u,u'}+(2wr^{-1}-w')\ang{h^2r^{-2}\Lambda u,u} \\+ (4h^{-1}w\varphi '+w')\norm{hu'}^2 + (w(E+(\varphi ')^2-V))'\norm{u}^2 + 2w\Re\ang{h\varphi ''u,u'}, 
\end{multline}
where we drop the $\Lp{2}{\S^{n-1}_{\theta}}$ subscript for ease of notation. 
   
In the proof of Theorem \ref{MainTheorem} we attain a lower bound for $(wF)'$ by constructing appropriate weight and phase functions $w$ and $\varphi$. This requires extra work in dimension two compared to dimensions $n\neq 2$ because of the $(2wr^{-1}-w')\ang{h^2r^{-2}\Lambda u, u}$ term in \eqref{wFderivative}. In dimensions $n\neq 2$, the operator $\Lambda$ is non-negative, so this term can be bounded below by zero, provided we require $2wr^{-1}-w'\geq 0$. In dimension two, however, $\Lambda$ has a negative eigenvalue and the rest of the eigenvalues are positive. This means a negative singularity occurs at $r=0$, which requires extra care when compared to the cases when $n\neq 2$.

\subsection*{Acknowledgements} I would like to give thanks to my supervisor, Jeffrey Galkowski, who provided much appreciated guidance during the research of this topic and to Kiril Datchev and Jacob Shapiro, who read earlier versions of this paper and gave helpful remarks. I would also like to acknowledge support from the EPSRC Doctoral Training Partnership through grants EP/W523835/1 and EP/V001760/1.  

\section{Near Origin Estimates}
The first step is to establish control on the behaviour of $u$ near the origin. Much of this uses techniques used in Section 4 \cite{CompactRPotentials}, but adapted for the more general $n$ dimensional problem with potentials that need not be either $L^\infty$, compactly supported or radial.

We define an integral transform, $\mathcal{M}$, known as the Mellin transform, and its inverse $\mathcal{M}_t^{-1}$ by
\begin{align}\label{eq:Mellin0}
\Mellin{u}(\sigma, \theta) := \int_0^\infty r^{i\sigma}u(r,\theta)\frac{dr}{r},\qquad &\Mellinverse{t}{v}(r,\theta) = \frac{1}{2\pi}\int_{\Im\sigma = t}r^{-i\sigma}v(\sigma,\theta)d\sigma,
\end{align} 
for $u\in\TestFn{\R_+\times\S^{n-1}}$ and $v\in L^1_{\Re\sigma}(\R;L^2_{\theta}(\S^{n-1}))\cap L^2_{\Re\sigma}(\R;L^2_{\theta}(\S^{n-1}))$. By the density of $\TestFn{\R_+\times\S^{n-1}}$ and $L^1_{\Re\sigma}(\R;L^2_{\theta}(\S^{n-1}))\cap L^2_{\Re\sigma}(\R;L^2_{\theta}(\S^{n-1}))$ in $L^2(\R_+\times\S^{n-1}, r^{-2t-1}drd\theta)$ and $L^2_{\Re\sigma, \theta}(\R\times\S^{n-1})$ respectively, the Mellin transform and its inverse are well-defined, bounded operators $\mathcal{M}:\Lp{2}{\R_+\times\S^{n-1}; r^{-2t-1}drd\theta}\to L^2_{\Re\sigma, \theta}(\R\times\S^{n-1})$ and $\mathcal{M}_t^{-1}:L^2_{\Re\sigma, \theta}(\R\times\S^{n-1})\to\Lp{2}{\R_+\times\S^{n-1}; r^{-2t-1}drd\theta}$. The Mellin transform and its inverse satisfy the following for all $\theta\in\S^{n-1}$:
\begin{align}\label{eq:Mellin1}
\norm{\Mellin{u}(\sigma,\theta)}_{L^2_{\Re\sigma}(\R)} &= (2\pi)^{\rfrac{1}{2}}\norm{r^{-t-\rfrac{1}{2}}u(r,\theta)}_{L^2_{r}(\R_+)},\\\label{eq:Mellin2}
\norm{r^{-t-\rfrac{1}{2}}\Mellinverse{t}{v}(r,\theta)}_{L^2_{r}(\R_+)} &= (2\pi)^{-\rfrac{1}{2}}\norm{v(\sigma,\theta)}_{L^2_{\Re\sigma}(\R)},
\\\label{eq:Mellin3}
\Mellin{r^n\dell_{r}^nu}(\sigma,\theta) &= (-1)^n\frac{\Gamma(i\sigma + n)}{\Gamma(i\sigma)}\Mellin{u}(\sigma,\theta),
\end{align}
where $\sigma = \tau+it\in\C$ and $\Gamma (z)$ is the Gamma function \cite{debnath2016integral}.

We now define $\lambda_j := j^2 + (n-2)j + \rfrac{(n-1)(n-3)}{4}$ for $j\in\N_0$, which are the eigenvalues of $\Lambda$.
Let 
\begin{gather}
T := \left\lbrace t\in\R : t = \frac{1\pm\sqrt{1+4\lambda_j}}{2} \text{ for some } j\in \N_0 \right\rbrace\notag
\intertext{which is the set of the imaginary parts of the poles of the map $\sigma\mapsto (\sigma^2-i\sigma + \Lambda)^{-1}$. Elements of $T$ are of the form}\label{tform} t_{\pm, j} = \frac{1\pm \left((n-2)+2j\right)}{2}
\intertext{Additionally, define}
\Upsilon (t) := \norm{(t^2-t-\Lambda)^{-1}}_{\Lp{2}{\S^{n-1}}\to\Lp{2}{\S^{n-1}}}.\notag
\end{gather}
For the operator 
\begin{equation}\label{defn:Q}
Q := -\dell_{r}^2+r^{-2}\Lambda,
\end{equation}
we have the following result.

\begin{lemma}
\label{DecomposeU}
There exists a $C > 0$ such that, for $N\in\R$ with $-N-1\in \R\setminus T$, $t_0\in\R\setminus T$ and $u\in r^{-N}\Lpcomp{2}{\R_+\times\S^{n-1}}$ with $Qu\in r^{t_0-\rfrac{3}{2}}\Lpcomp{2}{\R_+\times\S^{n-1}}$,
\begin{gather}
u =E_{t_0}\left(r^2Qu\right)+\Pi_{t_0}\left(r^2Qu\right)\label{eq:DecomposeU}
\intertext{where for each $\theta\in\S^{n-1}$}\nonumber 
\norm{r^{-t_0-\rfrac{1}{2}}E_{t_0}(v)(r,\theta)}_{L^2_{r}(\R_+)}\leq C\Upsilon(t_0)\norm{r^{-t_0-\rfrac{1}{2}}v(r,\theta)}_{L^2_{r}(\R_+)}\\\intertext{and}\nonumber
\Pi_{t_0}(v)(r,\theta) := i\sum_{\substack{-N-1<\Im\sigma <t_0\\\sigma^2-i\sigma + \lambda_j = 0 \\\text{for some }j\in\N_0}}\Res{r^{-i\sigma}(\sigma^2-i\sigma+\Lambda)^{-1}\Mellin{v}(\sigma,\theta)}{\sigma}.
\end{gather}
\end{lemma}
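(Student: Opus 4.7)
\emph{Proof plan.} The strategy is to apply the Mellin transform in $r$ to recast the equation $r^{2}Qu = v$ (where $v:=r^{2}Qu$) as an algebraic equation in $\sigma$, solve for $\mathcal{M}(u)$, recover $u$ via the inverse Mellin integral along a line well below $\Im\sigma = -N-\tfrac{1}{2}$, and then shift that contour upward to $\Im\sigma = t_0$, picking up the residues at the finitely many poles $\sigma\in iT$ lying in $(-N-1,\,t_0)$.

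I would first derive the identity $(\sigma^{2}-i\sigma+\Lambda)\mathcal{M}(u)(\sigma,\theta) = \mathcal{M}(v)(\sigma,\theta)$. For $u\in\TestFn{\R_+\times\S^{n-1}}$ this follows immediately from \eqref{eq:Mellin3} with $n=2$, which yields $\mathcal{M}(r^{2}\partial_r^{2}u) = (i\sigma)(i\sigma+1)\mathcal{M}(u) = (-\sigma^{2}+i\sigma)\mathcal{M}(u)$, combined with the observation that $\Lambda$ acts only in the spherical variable and hence commutes with $\mathcal{M}$. The isometry \eqref{eq:Mellin1} together with compact support of $u$ and $v$ implies $\mathcal{M}(u)(\cdot,\theta)\in L^{2}_{\Re\sigma}$ on every line $\Im\sigma=t$ with $t\leq -N-\tfrac{1}{2}$ and $\mathcal{M}(v)(\cdot,\theta)\in L^{2}_{\Re\sigma}$ on every line with $t\leq t_0$; approximating $u$ by smooth compactly supported functions (say via mollification within its support) and passing to the limit by continuity of $\mathcal{M}$ on these weighted $L^{2}$ spaces extends the identity to the $u$ at hand along the line $\Im\sigma=-N-1$, which lies in both admissible ranges since $-N-1\leq -N-\tfrac{1}{2}$ and $-N-1<t_0$.

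Because $-N-1\in\R\setminus T$, the operator $\sigma^{2}-i\sigma+\Lambda$ is boundedly invertible on $\Lp{2}{\S^{n-1}}$ uniformly along $\Im\sigma=-N-1$, so there $\mathcal{M}(u)(\sigma,\theta) = (\sigma^{2}-i\sigma+\Lambda)^{-1}\mathcal{M}(v)(\sigma,\theta)$, and Mellin inversion yields
\[
u(r,\theta) \;=\; \frac{1}{2\pi}\int_{\Im\sigma=-N-1} r^{-i\sigma}(\sigma^{2}-i\sigma+\Lambda)^{-1}\mathcal{M}(v)(\sigma,\theta)\,d\sigma.
\]
The integrand is meromorphic in the strip $-N-1<\Im\sigma<t_0$ with simple poles exactly where $\sigma^{2}-i\sigma+\lambda_j=0$ for some $j\in\N_0$, i.e., at $\sigma=it_{\pm,j}$; at each such pole the resolvent has a rank-one singularity on the $\lambda_j$-eigenspace of $\Lambda$. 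Shifting the contour from $\Im\sigma=-N-1$ to $\Im\sigma=t_0$ via Cauchy's theorem produces $u = E_{t_0}(v)+\Pi_{t_0}(v)$, with the residue sum realising $\Pi_{t_0}(v)$ exactly as defined. The norm bound on $E_{t_0}$ then follows from \eqref{eq:Mellin2} combined with the uniform operator inequality $\|(\sigma^{2}-i\sigma+\Lambda)^{-1}\|_{\Lp{2}{\S^{n-1}}\to\Lp{2}{\S^{n-1}}}\leq C\Upsilon(t_0)$ on $\Im\sigma=t_0$, which one checks by diagonalising $\Lambda$ and bounding $|\sigma^{2}-i\sigma+\lambda_j|$ from below along that line in terms of $|t_0^{2}-t_0-\lambda_j|$.

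The principal obstacle is rigorously justifying the contour shift, since the integrand is only $L^{2}$ on each horizontal line and the vanishing of the two vertical segments of the rectangular contour as $|\Re\sigma|\to\infty$ is not automatic from $L^{2}$ data alone. I would handle this by first approximating $v$ by smooth compactly supported $v_\epsilon\to v$ in the weighted $L^{2}$ norm; then $\mathcal{M}(v_\epsilon)$ decays rapidly in $|\Re\sigma|$ uniformly in the strip, and, combined with the $|\sigma|^{-2}$ decay of the resolvent at infinity, forces the vertical contributions to vanish. Passing $\epsilon\to 0$ then recovers the decomposition for the original $v$ by continuity of $E_{t_0}$ (from the norm bound above) and of $\Pi_{t_0}$ (a finite-rank operator continuous in the values $\mathcal{M}(v)(it_{\pm,j},\theta)$, which are well-defined because $v$ is compactly supported).
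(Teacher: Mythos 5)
Your proof plan matches the paper's proof in all essential respects: Mellin-transform the equation, invert the spectral multiplier $(\sigma^2-i\sigma+\Lambda)^{-1}$, invert the Mellin transform along $\Im\sigma=-N-1$, shift the contour to $\Im\sigma=t_0$ picking up the residues, and bound $E_{t_0}$ via the Plancherel identity together with a lower bound on $|\sigma^2-i\sigma+\lambda_j|$ along $\Im\sigma=t_0$ minimized at $\Re\sigma=0$. The only cosmetic difference is the justification of the contour shift: you approximate $v$ by smooth functions to get rapid decay of $\mathcal{M}(v_\epsilon)$ in $|\Re\sigma|$, whereas the paper simply observes that for any $t<t_0$ the compact support and the weighted $L^2$ hypothesis already make $\mathcal{M}(v)(\cdot,\theta)$ uniformly bounded in $\Re\sigma$, which combined with the $O(|\sigma|^{-2})$ resolvent decay kills the vertical segments directly; your density step is therefore sound but unnecessary.
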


\begin{proof}
Given $t_0\in\R\setminus T$, we can assume, without loss of generality, that $-N<t_0$.

Since $u\in r^{-N}\Lpcomp{2}{\R_+\times\S^{n-1}}$, the Mellin transform of $u$, $\Mellin{u}(\sigma,\theta)$, is holomorphic on the region $\Im\sigma<-N-\rfrac{1}{2}$.

Using \eqref{eq:Mellin3}, we can write 
\begin{equation*}
\Mellin{r^2Qu}(\sigma,\theta) = (\sigma^2-i\sigma+\Lambda)\Mellin{u}(\sigma,\theta),\qquad \Im\sigma < -N-\rfrac{1}{2},
\end{equation*}
which gives
\begin{align*}
u(r,\theta) &= \frac{1}{2\pi}\int_{\Im\sigma = -N-1}r^{-i\sigma}(\sigma^2-i\sigma + \Lambda)^{-1}\Mellin{r^2Qu}(\sigma,\theta)d\sigma\\
&=\frac{1}{2\pi}\lim_{R\to\infty}\int_{\gamma_{R, -N-1}}r^{-i\sigma}(\sigma^2-i\sigma + \Lambda)^{-1}\Mellin{r^2Qu}(\sigma,\theta)d\sigma,
\end{align*}
where $\gamma_{R,t} := \lbrace\sigma\in\C : \Re\sigma\in [-R,R], \Im\sigma = t\rbrace.$

We want to deform the contour to $\Im\sigma = t_0-\varepsilon$ for $\varepsilon > 0$ then take the limit as $\varepsilon\to 0$.

In the region $\Im\sigma < t_0$, $\norm{\Mellin{r^2Qu}(\sigma)}_{L_{\Re\sigma}^{\infty}}$ is finite, so we have
\begin{equation*}
\left|\int_{\gamma_{\pm R,-N,-\varepsilon}}r^{-i\sigma}(\sigma^2-i\sigma+\Lambda )^{-1}\Mellin{r^2Qu}(\sigma, \theta)d\sigma\right|\leq \frac{C_\varepsilon}{R^2}
\end{equation*}
where $\gamma_{\pm R,-N,-\varepsilon} := \lbrace\pm R + it : t\in [-N-1, t_0-\varepsilon]\rbrace$. In particular, using $t_0\in\R\setminus T$, $\Mellin{r^2Qu}(\sigma, \theta)$ varies continuously in $L^2_{\Re\sigma}(\R)$ for $\Im\sigma \leq t_0$, for each value of $\theta$. 

Sending $\varepsilon\to 0$ and $R\to\infty$ gives us
\begin{multline*}
u(r,\theta) = \frac{1}{2\pi}\int_{\Im\sigma = t_0}r^{-i\sigma}(\sigma^2-i\sigma+\Lambda)^{-1}\Mellin{r^2Qu}(\sigma,\theta)d\sigma \\+ i\sum_{\substack{-N-1<\Im\sigma <t_0\\\sigma^2-i\sigma + \lambda_j = 0 \\\text{for some }j\in\N_0}}\Res{r^{-i\sigma}(\sigma^2-i\sigma+\Lambda)^{-1}\Mellin{r^2Qu}(\sigma,\theta)}{\sigma}.
\end{multline*}
We can therefore define
\begin{align}
E_{t_0}(v)(r,\theta) &:= \frac{1}{2\pi}\int_{\Im\sigma = t_0}r^{-i\sigma}(\sigma^2-i\sigma+\Lambda)^{-1}\Mellin{v}(\sigma,\theta)d\sigma \notag\\\label{PiSum}
\Pi_{t_0}(v)(r,\theta) &:= i\sum_{\substack{-N-1<\Im\sigma <t_0\\\sigma^2-i\sigma + \lambda_j = 0 \\\text{for some }j\in\N_0}}\Res{r^{-i\sigma}(\sigma^2-i\sigma+\Lambda)^{-1}\Mellin{v}(\sigma,\theta)}{\sigma}.
\end{align}
Using \eqref{eq:Mellin0}, we can write 
\begin{equation*}
E_{t_0}(v) = \Mellinverse{t_0}{(\sigma^2-i\sigma+\Lambda)^{-1}\Mellin{v}(\sigma,\theta)}.
\end{equation*}
We now show that for $\Im{\sigma} = t_0$, $(\sigma^2-i\sigma+\Lambda)^{-1}$ on $\Lp{2}{\S^{n-1}}$ is bounded by $\Upsilon(t_0)$. As $\Lambda$ is self-adjoint, we have that
\begin{equation*}
\norm{(\sigma^2-i\sigma +\Lambda)^{-1}}_{\Lp{2}{\S^{n-1}}\to\Lp{2}{\S^{n-1}}} = \text{dist}(i\sigma-\sigma^2, \lbrace\lambda_j : j\in\N_0\rbrace)^{-1}.
\end{equation*}
To find an upper bound on $\text{dist}(i\sigma-\sigma^2, \lbrace\lambda_j : j\in\N_0\rbrace)^{-1}$, we notice that
\begin{equation*}
\sup_{\Im{\sigma} = t_0}(\text{dist}(i\sigma-\sigma^2, \lbrace\lambda_j : j\in\N_0\rbrace)^{-1})=\sup_{j\in\N_0}(\inf_{\Im{\sigma}=t_0}|i\sigma-\sigma^2-\lambda_j|)^{-1}.
\end{equation*}
Through calculation it can be showed that the minimum of $|i\sigma-\sigma^2-\lambda_j|$ with respect to $\Re{\sigma}$ occurs when $\Re{\sigma} = 0$, therefore
\begin{align*}
\sup_{\Im{\sigma} = t_0}(\text{dist}(i\sigma-\sigma^2, \lbrace\lambda_j : j\in\N_0\rbrace)^{-1})&= \sup_{j\in\N_0}|t_0^2-t_0-\lambda_j|^{-1}\\
&=\text{dist}(t_0^2-t_0, \lbrace\lambda_j : j\in\N_0 \rbrace)^{-1}\\
&=\norm{(t_0^2-t_0-\Lambda)^{-1}}_{\Lp{2}{\S^{n-1}}\to\Lp{2}{\S^{n-1}}}.
\end{align*}
Now, using \eqref{eq:Mellin1} and \eqref{eq:Mellin2} we obtain the desired result.
\end{proof}

Let $\alpha = \alpha (h) := \alpha_0h$ where $0<\alpha_0 < (2C\Upsilon (t_0))^{-\rfrac{1}{2}}$ and 
\begin{equation*}
a :=\min\left\lbrace\alpha, \left(\frac{h^2}{2Cc_1\Upsilon (t_0)}\right)^{\frac{1}{2-\delta}}\right\rbrace.
\end{equation*}
Here, $C$ is given by Lemma \ref{DecomposeU} and $c_1$ is from \eqref{VCond2}. Define $\alpha_1 := \max\lbrace\alpha,\rfrac{1}{2}\rbrace$. Additionally, for $R_1, R_2 \in [0,\infty]$, define $\Ann{R_1}{R_2} := \lbrace (r,\theta)\in (0,\infty) \times\S^{n-1} : R_1 < r < R_2\rbrace$ to be the annulus with inner and outer radii of $R_1$ and $R_2$ respectively. When integrating over $A(R_1,R_2)$, we will do so with respect to $drd\theta$. We have the following estimate for the behaviour of $u\in r^{\frac{n-1}{2}}\TestFn{\R^n}$ near the origin.

\begin{lemma}\label{lemNearOriginEstimate}
Suppose $E>0$, $t_0\in (-\rfrac{1}{2},\min(0,\rfrac{3}{2}-\delta))$ and the potential $V$ satisfies \eqref{VCond2}. Then there exists $C,h_0 >0$ such that for every $h\in (0,h_0]$, $0\leq\varepsilon\leq 1$ and $u\in r^{\frac{n-1}{2}}\TestFn{\R^n}$, we have

\begin{multline}\label{NearOriginEstimate}
\norm{r^{-\rfrac{1}{2}-t_0}u}_{\Lp{2}{\Ann{0}{\alpha_1}}} \leq C\Upsilon (t_0)h^{-2}\left(\norm{r^{\rfrac{3}{2}-t_0}r^{\rfrac{n-1}{2}}(P-E\pm i\varepsilon){r^{-\rfrac{n-1}{2}}}u}_{\Lp{2}{\Ann{0}{2\alpha_1}}}\right. \\
\left. \quad+ \norm{r^{\rfrac{3}{2}-t_0}(V-E\pm i\varepsilon )u}_{\Lp{2}{\Ann{a}{2\alpha_1}}}\right.\\
+ \left. h\norm{r^{\rfrac{3}{2}-t_0}hu'}_{\Lp{2}{\Ann{\alpha_1}{2\alpha_1}}} + h\norm{r^{\rfrac{3}{2}-t_0}hu}_{\Lp{2}{\Ann{\alpha_1}{2\alpha_1}}}\right)
\end{multline}

\begin{proof}
Let $\chi\in\TestFn{[0,2)}$ with $\chi\equiv 1$ in a neighbourhood of $[0,1]$. Define $\chi_{\alpha_1}(r):=\chi(\alpha_1^{-1}r)$. By \eqref{defn:Q} and \eqref{SchrodingerOperator}, we can write 
\begin{equation*}
Q = h^{-2}r^{\rfrac{n-1}{2}}(P-E\pm i\varepsilon)r^{-\rfrac{n-1}{2}}-h^{-2}(V-E\pm i\varepsilon)
\end{equation*}
therefore
\begin{multline}\label{Qchi} Q\chi_{\alpha_1}u = h^{-2}\chi_{\alpha_1}r^{\rfrac{n-1}{2}}(P-E\pm i\varepsilon)r^{-\rfrac{n-1}{2}}u + h^{-2}[r^{\rfrac{n-1}{2}}(P-E\pm i\varepsilon)r^{-\rfrac{n-1}{2}},\chi_{\alpha_1}]u \\- h^{-2}\chi_{\alpha_1}(V-E\pm i\varepsilon)u.
\end{multline}
Since $u\in r^{\frac{n-1}{2}}\TestFn{\R^n}$, $|V|\ind{r<1}\leq c_1r^{-\delta}$,  and $\chi_{\alpha_1}$ is constant near zero, $r^2Q\chi_{\alpha_1}u\in r^{2-\delta}\Lp{2}{\R_+\times\S^{n-1}}$, and, due to $t_0 < 0$ and Lemma \ref{DecomposeU}
\begin{equation}\label{abovee}
\chi_{\alpha_1}u = E_{t_0}(r^2Q\chi_{\alpha_1}u)+\Pi_{t_0}(r^2Q\chi_{\alpha_1}u),
\end{equation}
furthermore, we can multiply both sides of \eqref{abovee} by a function $\kappa$ such that $\kappa\equiv 1$ on the interval $[0,2]$ and $\kappa\equiv 0$ outside a compact set. Using Lemma \ref{DecomposeU} we get that $\kappa E_{t_0}(r^2Q\chi_{\alpha_1}u)\in\Lp{2}{\R_+\times\S^{n-1}}$, which implies $\kappa \Pi_{t_0}(r^2Q\chi_{\alpha_1}u)\in\Lp{2}{\R_+\times\S^{n-1}}$.

To calculate $\Pi_{t_0}(r^2Q\chi_{\alpha_1}u)$, we refer to \eqref{PiSum}. We need to find where the singularities of $r^{-i\sigma}(\sigma^2-i\sigma+\Lambda)^{-1}\Mellin{r^2Q\chi_{\alpha_1}u}$ occur in the $\sigma$ variable, which is done by solving for $\sigma_j$ in $\sigma_j^2-i\sigma_j+\lambda_j = 0$ for $-N-1<\Im{\sigma_j}<t_0$ for each $j\in\N_0$. 
In doing so we see that $\sigma_j = it_j$, where $t_j\in T$ can be written in the form given by \eqref{tform}. We also require $-N-1<\Im{\sigma_j}<t_0$, so each of the $\sigma_j$ have a negative imaginary part. Therefore, there are a finite amount of $\sigma_j$ being summed over, with the form $\sigma_j = i\frac{1}{2}(3-n-2j)$, where $j\geq 0$ for $n\geq 3$ and $j \geq 1$ when $n = 2$.
The $\Theta_j\in\Lp{2}{\S^{n-1}}$ are given by
\begin{equation*}
\Theta_j(\theta) = i\Res{r^{-i\sigma}(\sigma^2-i\sigma+\Lambda)^{-1}\Mellin{r^2Q\chi_{\alpha_1}u}(\sigma,\theta)}{\sigma = \sigma_j},
\end{equation*}
therefore we can write, for some $J\in\N$,
\begin{equation*}
\Pi_{t_0}(r^2Q\chi_{\alpha_1}u)(r,\theta) = \sum_{j=j_0}^Jr^{\rfrac{3-n}{2}-j}\Theta_j(\theta),
\end{equation*}
where $j_0 = 1$ when $n = 2$ or $n=3$ and $j_0 = 0$ otherwise.
 
For $\kappa\Pi_{t_0}(r^2Q\chi_{\alpha_1}u)$ to be in $\Lp{2}{\R_+\times\S^{n-1}}$, we must have $\Pi_{t_0}(r^2Q\chi_{\alpha_1}u) = 0$, in particular
\begin{equation*}
\chi_{\alpha_1}u = E_{t_0}(r^2Q\chi_{\alpha_1}u).
\end{equation*}
Using this fact, Lemma \ref{DecomposeU} and \eqref{Qchi}, we arrive at
\begin{align}
\notag\norm{r^{-\rfrac{1}{2}-t_0}u}_{\Lp{2}{\Ann{0}{\alpha}}}&\leq \norm{r^{-\rfrac{1}{2}-t_0}\chi_{\alpha_1}u}_{\Lp{2}{\R_+\times\S^{n-1}}}\\
\notag &= \norm{r^{-\rfrac{1}{2}-t_0}E_{t_0}(r^2Q\chi_{\alpha_1}u)}_{\Lp{2}{\R_+\times\S^{n-1}}}\\
\notag &\leq C\Upsilon (t_0)\norm{r^{\rfrac{3}{2}-t_0}Q\chi_{\alpha_1}u}_{\Lp{2}{\R_+\times\S^{n-1}}}\\\label{ineq0}
\begin{split}
&\leq C\Upsilon (t_0)h^{-2}\left(\norm{\chi_{\alpha_1}r^{\rfrac{3}{2}-t_0}r^{\rfrac{n-1}{2}}(P-E\pm i\varepsilon){r^{-\rfrac{n-1}{2}}}u}_{\Lp{2}{\R_+\times\S^{n-1}}}\right.\\
&\quad+\left.\norm{r^{\rfrac{3}{2}-t_0}[r^{\rfrac{n-1}{2}}(P-E\pm i\varepsilon){r^{-\rfrac{n-1}{2}}},\chi_{\alpha_1}]u}_{\Lp{2}{\R_+\times\S^{n-1}}}\right. \\&\quad+\left.\norm{\chi_{\alpha_1}r^{\rfrac{3}{2}-t_0}(V-E\pm i\varepsilon )u}_{\Lp{2}{\R_+\times\S^{n-1}}}\right).
\end{split}
\end{align}
We have defined $\chi_{\alpha_1}$ so that it is supported on the interval $[0,2\alpha_1]$ and bounded above by $1$, therefore
\begin{equation}\label{ineq1}
\norm{\chi_{\alpha_1}r^{\rfrac{3}{2}-t_0}r^{\rfrac{n-1}{2}}(P-E\pm i\varepsilon){r^{-\rfrac{n-1}{2}}}u}_{\Lp{2}{\R_+\times\S^{n-1}}}\leq \norm{r^{\rfrac{3}{2}-t_0}r^{\rfrac{n-1}{2}}(P-E\pm i\varepsilon){r^{-\rfrac{n-1}{2}}}u}_{\Lp{2}{\Ann{0}{2\alpha_1}}}.
\end{equation}
A straightforward calculation of the term involving the commutator gives, 
\begin{equation*}
[r^{\rfrac{n-1}{2}}(P-E\pm i\varepsilon){r^{-\rfrac{n-1}{2}}},\chi_{\alpha_1}]u=-h^2(\chi_{\alpha_1}''u+2\chi_{\alpha_1}'u')
\end{equation*}
the support of which is contained within the interval $[\alpha_1,2\alpha_1]$, therefore,
\begin{multline}\label{ineq2}
\norm{r^{\rfrac{3}{2}-t_0}[r^{\rfrac{n-1}{2}}(P-E\pm i\varepsilon){r^{-\rfrac{n-1}{2}}},\chi_{\alpha_1}]u}_{\Lp{2}{\R^n,drd\theta}} \leq h^2\norm{r^{\rfrac{3}{2}-t_0}u}_{\Lp{2}{\Ann{\alpha_1}{2\alpha_1}}} \\+ h\norm{r^{\rfrac{3}{2}-t_0}hu'}_{\Lp{2}{\Ann{\alpha_1}{2\alpha_1}}}.
\end{multline}
Using \eqref{VCond2}, we arrive at
\begin{multline}\label{ineq3}
\norm{\chi_{\alpha_1}r^{\rfrac{3}{2}-t_0}(V-E\pm i\varepsilon )u}_{\Lp{2}{\R_+\times\S^{n-1}}}\leq c_1a^{2-\delta}\norm{r^{-\rfrac{1}{2}-t_0}u}_{\Lp{2}{\Ann{0}{a}}}\\+\norm{r^{\rfrac{3}{2}-t_0}(V-E\pm i\varepsilon )u}_{\Lp{2}{\Ann{a}{2\alpha_1}}}.
\end{multline}
We substitute inequalities \eqref{ineq1} to \eqref{ineq3} into \eqref{ineq0} and subtract $\norm{r^{-\rfrac{1}{2}-t_0}u}_{\Lp{2}{\Ann{0}{a}}}$ to conclude the proof.
\end{proof}
\end{lemma}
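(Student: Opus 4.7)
The plan is to localize $u$ to a neighborhood of the origin by a cutoff $\chi_{\alpha_1}(r):=\chi(\alpha_1^{-1}r)$, apply the Mellin decomposition from Lemma \ref{DecomposeU} to $\chi_{\alpha_1}u$, and then argue that the residue sum $\Pi_{t_0}(r^{2}Q\chi_{\alpha_1}u)$ must vanish identically. The relation $Q=h^{-2}r^{(n-1)/2}(P-E\pm i\varepsilon)r^{-(n-1)/2}-h^{-2}(V-E\pm i\varepsilon)$ rewrites $r^{2}Q\chi_{\alpha_1}u$ as a sum of three pieces: a truncated driving term, a commutator term localized in $[\alpha_1,2\alpha_1]$, and a $V$-dependent term. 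Since $u\in r^{(n-1)/2}C_c^{\infty}(\mathbb{R}^n)$ and $|V|\mathbf{1}_{r<1}\lesssim r^{-\delta}$ with $t_0<3/2-\delta$, one checks $r^{2}Q\chi_{\alpha_1}u\in r^{t_0-3/2}L^2_{\textup{comp}}$, which allows us to invoke Lemma \ref{DecomposeU}.

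The key structural step is the vanishing of $\Pi_{t_0}$. The candidate poles lie at $\sigma_j=i(3-n-2j)/2$ in the strip $-N-1<\Im\sigma<t_0$, and the corresponding residues produce terms of the form $r^{(3-n)/2-j}\Theta_j(\theta)$. Multiplying \eqref{eq:DecomposeU} by a cutoff $\kappa$ equal to $1$ on $[0,2]$ and compactly supported shows that $\kappa E_{t_0}(r^{2}Q\chi_{\alpha_1}u)\in L^{2}(\mathbb{R}_{+}\times\S^{n-1})$, forcing $\kappa \Pi_{t_0}\in L^{2}$ as well. But the powers $r^{(3-n)/2-j}$ (for $j\geq j_0$) are not square-integrable near the origin with respect to $dr\,d\theta$, so each $\Theta_j$ must vanish. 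Consequently $\chi_{\alpha_1}u=E_{t_0}(r^{2}Q\chi_{\alpha_1}u)$.

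Combining this identity with the norm estimate from Lemma \ref{DecomposeU} yields
\begin{equation*}
\|r^{-1/2-t_0}u\|_{L^2(A(0,\alpha_1))}\leq C\Upsilon(t_0)\|r^{3/2-t_0}Q\chi_{\alpha_1}u\|_{L^{2}(\mathbb{R}_{+}\times\S^{n-1})},
\end{equation*}
and then the triangle inequality splits this into three contributions. The truncated $(P-E\pm i\varepsilon)$ contribution is directly bounded by the first term on the right of \eqref{NearOriginEstimate} because $\chi_{\alpha_1}$ is supported in $[0,2\alpha_1]$. The commutator $[r^{(n-1)/2}(P-E\pm i\varepsilon)r^{-(n-1)/2},\chi_{\alpha_1}]u=-h^{2}(\chi_{\alpha_1}''u+2\chi_{\alpha_1}'u')$ is supported in $[\alpha_1,2\alpha_1]$, giving the last two terms of \eqref{NearOriginEstimate}. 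Finally, for the $V$ term we split integration at $r=a$: on $(0,a)$ we use $|V|\leq c_{1}r^{-\delta}$ to dominate by $c_{1}a^{2-\delta}\|r^{-1/2-t_0}u\|_{L^2(A(0,a))}$, and on $(a,2\alpha_1)$ we retain the raw $L^{2}$ norm.

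The main obstacle is the absorption step: the $c_{1}a^{2-\delta}$ factor on the left-over piece. This is where the subtle choice $a=\min\{\alpha,(h^2/(2Cc_1\Upsilon(t_0)))^{1/(2-\delta)}\}$ and the upper bound $\alpha_0<(2C\Upsilon(t_0))^{-1/2}$ come in: they ensure $C\Upsilon(t_0)h^{-2}c_{1}a^{2-\delta}\leq 1/2$, so the contribution $\|r^{-1/2-t_0}u\|_{L^2(A(0,a))}$ on the right can be absorbed into the left side (noting $A(0,a)\subset A(0,\alpha_1)$), completing the inequality after the $E\pm i\varepsilon$ piece has been lumped into the $V$ term.
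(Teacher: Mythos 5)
Your proposal is correct and follows essentially the same route as the paper: localize with $\chi_{\alpha_1}$, invoke Lemma \ref{DecomposeU}, kill $\Pi_{t_0}$ by the non-$L^2$ powers argument, split $Q\chi_{\alpha_1}u$ into the three contributions, and absorb the near-origin $V$ piece using the definition of $a$. (Minor slip: to apply Lemma \ref{DecomposeU} the membership should read $Q\chi_{\alpha_1}u\in r^{t_0-3/2}L^2_{\mathrm{comp}}$, equivalently $r^2Q\chi_{\alpha_1}u\in r^{t_0+1/2}L^2_{\mathrm{comp}}$, not $r^2Q\chi_{\alpha_1}u\in r^{t_0-3/2}L^2_{\mathrm{comp}}$; the paper gets this via $r^2Q\chi_{\alpha_1}u\in r^{2-\delta}L^2_{\mathrm{comp}}$ and $t_0<\min(0,3/2-\delta)$.)
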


\section{Constructing Phase and Weight Functions}

We return to the energy functional defined in \eqref{EnergyFunctional} and, for a weight function $w\in C^0[0,\infty)$ that is piecewise $C^1$, the distribution on $(wF)'$ on $(0,\infty)$,
\begin{multline*}
(wF)'= -2w\Re\ang{\Pconj u, u'}\mp2\varepsilon w\Im\ang{u,u'}+wq\ang{h^2r^{-2}\Lambda u,u} \\+ (4h^{-1}w\varphi '+w')\norm{hu'}^2 + (w(E+(\varphi ')^2-V))'\norm{u}^2 + 2w\Re\ang{h\varphi ''u,u'}.
\end{multline*}
where
\begin{equation}\label{defineq}
q:=2r^{-1} - \frac{w'}{w}.
\end{equation}
Using $2ab \geq -(\gamma a^2 + \gamma^{-1}b^2)$ for $\gamma > 0$, we get
\begin{align*}
(wF)'&\geq -\frac{\gamma_1w^2}{h^2w'}\norm{\Pconj u}^2\mp 2\varepsilon w\Im\ang{u,u'} \\&\quad+ wq\ang{h^2r^{-2}\Lambda u,u} + (4(1-\gamma_2^{-1})h^{-1}w\varphi '+ (1-\gamma_1^{-1}-\gamma_2^{-1})w')\norm{hu'}^2 \\&\quad+ (w(E+(\varphi')^2-V))'\norm{u}^2 - \frac{\gamma_2(w\varphi '')^2}{w'+4h^{-1}\varphi 'w}\norm{u}^2,
\end{align*}
for $\gamma_1,\gamma_2 > 0$. Setting $\eta > 0$ and letting $\gamma_1 = 2(1+\eta)\eta^{-1}$ and $\gamma_2 = 1+\eta$ we see that
\begin{multline*}
(wF)' \geq -\frac{2(1+\eta)w^2}{\eta h^2w'}\norm{\Pconj u}^2\mp 2\varepsilon w\Im\ang{u,u'} + wq\ang{h^2r^{-2}\Lambda u,u}\\+\left( 1-\frac{1}{1+\eta}\left(1+\frac{\eta}{2}\right)\right)w'\norm{hu'}^2+ (w(E+(\varphi')^2-V))'\norm{u}^2- \frac{(1+\eta)(w\varphi '')^2}{w'+4h^{-1}\varphi 'w}\norm{u}^2.
\end{multline*}
To control the term involving $\Lambda$ above, we will require that $q\geq 0$ and use the fact that $\Lambda \geq -\rfrac{1}{4}$ for any $n\in\N$ to get
\begin{multline}\label{wFBoundBelow}
(wF)' \geq -\frac{2(1+\eta)w^2}{\eta h^2w'}\norm{\Pconj u}^2\mp 2\varepsilon w\Im\ang{u,u'} + \left( 1-\frac{1}{1+\eta}\left(1+\frac{\eta}{2}\right)\right)w'\norm{hu'}^2 \\- \frac{h^2}{4r^2}wq\norm{u}^2+ (w(E+(\varphi')^2-V))'\norm{u}^2- \frac{(1+\eta)(w\varphi '')^2}{w'+4h^{-1}\varphi 'w}\norm{u}^2.
\end{multline}
Define
\begin{align*}
A(r):= (w(E+(\varphi ')^2-V))'-\frac{h^2wq}{4r^2},\qquad & B(r) := \frac{(w\varphi '')^2}{w' + 4h^{-1}\varphi 'w}.
\end{align*}
Let
\begin{equation*} 
b := \sup\left\lbrace r > 1 : V + \frac{1}{2}rV'\geq\frac{E}{4} \text{ and } V\geq\frac{E}{4}\right\rbrace,
\end{equation*}
which is finite because \eqref{VCond3} and \eqref{VCond6} imply $V, rV'\to 0$ as $r\to\infty$. Let
\begin{equation*}
M := 2\max\lbrace b, 6\sqrt{3}, 8KE^{-\rfrac{1}{2}}\rbrace, 
\end{equation*}
where the constant $K$ satisfies
\begin{equation}\label{constantC}
K\geq\max\left\lbrace \left(\frac{24c_1}{16-8\delta-(1+\eta)\delta^2}\right)^{\frac{1}{2}}, \sup_{1\leq r\leq b}\frac{1}{2}(1+r)^{\rfrac{3}{2}}\sqrt{1+y(r)+c_0m(r)}\right\rbrace.
\end{equation}
When $w',\varphi '\neq 0$, define
\begin{align}\label{ScriptWandPhi}
\mathcal{W} := \frac{w}{w'},\qquad& \Phi:=\frac{\varphi ''}{\varphi '}.
\end{align}

The goal is to construct weight and phase functions $w$ and $\varphi$ respectively, so that \eqref{wFBoundBelow} has a useful lower bound. This will be crucial in proving Theorem \ref{MainTheorem}.
Let
\begin{align}\label{defWeightPhase}
\varphi '(r) &= \left\lbrace\begin{array}{l r}
Kr^{-\rfrac{\delta}{2}} & 0\leq r\leq 1\\
\frac{2K}{1+r} & 1\leq r\leq \rfrac{M}{2}\\
\frac{8K}{M^2(1+\rfrac{M}{2})}(M-r)^2 & \rfrac{M}{2}\leq r< M\\
0 & r\geq M.
\end{array}
\right.,\\\nonumber \mathcal{W}(r) &= \left\lbrace\begin{array}{l r}
\frac{1}{2}r & 0<r< M\\
\min\left(\frac{Er^3}{4(c_0r^2m(r)+1)},\ang{r}^{2s}\right) & r > M
\end{array}\right. .
\end{align}
We can calculate the weight function $w$ to be
\begin{align}\label{wCalculated}
w(r) = \left\lbrace\begin{array}{l r}
K_1r^2 & 0\leq r< M\\
K_1M^2e^{\int_M^r\mathcal{W}^{-1}dr} & r\geq M
\end{array}
\right.
\end{align}
for some constant $K_1>0$.

\begin{lemma}\label{WeightAndPhase}
Fix $0<\eta<\ \frac{16-8\delta-\delta^2}{\delta^2}$ and suppose $V$ satisfies conditions \eqref{VCond1} through to \eqref{VCond6}. Then, for $h\in (0,h_0]$, where $h_0 < \frac{27E}{4(1+\eta)K}$ and for $\varphi$ and $w$ defined by \eqref{ScriptWandPhi} and \eqref{defWeightPhase},
\begin{equation*}
A - (1+\eta)B\geq\frac{E}{2}w'.
\end{equation*}
\begin{proof}
By slightly adapting (2.10) in \cite{LongRangeLipschitz}, we have the inequality
\begin{multline}\label{GaShInequality}
A-(1+\eta)B\geq w'\left[E + (\varphi ')^2\left(1+2\mathcal{W}\Phi-(1+\eta)\mathcal{W}\Phi^2\min \left(\mathcal{W}, \frac{h}{4\varphi '}\right)\right)\right.\\\left.-V-\mathcal{W}\left(V'+\frac{h^2q}{4r^2}\right)\right].
\end{multline}
For $0\leq r\leq 1$, we have $\varphi ' = Kr^{-\rfrac{\delta}{2}}$ and $\mathcal{W} = \frac{1}{2}r$, which implies $q=0$ and $\Phi = -\frac{\delta}{2r}$. Using \eqref{GaShInequality} in conjunction with \eqref{VCond2}, \eqref{VCond5} and \eqref{constantC}, we see that
\begin{equation*}
A-(1+\eta)B \geq w'\left[E + K^2r^{-\delta}\left(1-\frac{\delta}{2}-(1+\eta)\frac{\delta^2}{16}\right)-\frac{3}{2}c_1r^{-\delta}\right]\geq \frac{E}{2}w'.
\end{equation*}
For $1<r\leq \rfrac{M}{2}$, we have $\varphi ' = 2K(1+r)^{-1}$ and $\mathcal{W} = \frac{1}{2}r$, which implies $q=0$ and $\Phi = -\rfrac{1}{(1+r)}$. From the definition of $b$ and \eqref{VCond3} and \eqref{VCond6}, we have that
\begin{equation*}
V + \frac{1}{2}rV' \leq (y(r) + c_0m(r))\ind{1\leq r\leq b} + \frac{E}{4}\ind{r\geq b}.
\end{equation*}
Given that
\begin{equation*}
(\varphi ')^2\left(\mathcal{W}\Phi^2\min\left(\mathcal{W},\frac{h}{4\varphi '}\right)\right)\leq \frac{Khr}{4(1+r)^3}\leq \frac{Kh_0}{27},
\end{equation*}
the two inequalities above together with \eqref{GaShInequality} give us
\begin{align*}
A-(1+\eta)B &\geq w'\left[\frac{3E}{4}+\frac{4K^2}{(1+r)^3}-(1+\eta)\frac{Kh_0}{27}-(y(r)+c_0m(r))\ind{1\leq r\leq b}\right]\\&\geq w'\left[\frac{3E}{4}+\left(\frac{4K^2}{(1+r)^3}-(y(r)+c_0m(r))\right)\ind{1\leq r\leq b}-(1+\eta)\frac{Kh_0}{27}\right].
\end{align*}
Using \eqref{constantC} and the assumption that $h_0 < \frac{27E}{4(1+\eta)K}$, we see that,
\begin{equation*}
A-(1+\eta)B \geq \frac{E}{2}w',
\end{equation*}
for $1<r<\rfrac{M}{2}$.

For $\rfrac{M}{2}\leq r< M$, we have
\begin{gather*}
\varphi '(r) = \frac{8K}{M^2(1+\rfrac{M}{2})}(M-r)^2,\\
\Phi = \frac{-2}{M-r},\\
\mathcal{W} = \frac{1}{2}r,
\end{gather*}
and, because $\rfrac{M}{2}>b$, $V+\rfrac{1}{2}rV'\leq\rfrac{E}{4}$. We can then see that
\begin{gather*}
(\varphi ')^2(1+2\mathcal{W}\Phi)= (\varphi ')^2\left(1-\frac{2r}{M-r}\right)\geq -\frac{128K^2}{M^4\left(1+\rfrac{M}{2}\right)^2}r(M-r)^3\\\intertext{and}
(\varphi ')^2\mathcal{W}\Phi^2\min\left(\mathcal{W}, \frac{h}{4\varphi '}\right)\leq\frac{4Kh_0r}{M^2(1+\rfrac{M}{2})}.
\end{gather*}
Using \eqref{GaShInequality} and the fact that $\mathcal{W} = \frac{1}{2}r$ implies $q = 0$ and that $h_0 < \frac{27E}{4(1+\eta)K}$, we get
\begin{gather*}
A-(1+\eta)B\geq w'\left[\frac{3E}{4} - \frac{128K^2}{M^4(1+\rfrac{M}{2})^2}r(M-r)^3 - \frac{27Er}{M^2(1+\rfrac{M}{2})}\right],\\\intertext{which, together with}
r(M-r)^3\leq\frac{M^4}{16}\qquad\text{ and }\qquad r< M,\\\intertext{yields} A-(1+\eta)B\geq w'\left[\frac{3E}{4}-\frac{54E}{M^2}-\frac{32K^2}{M^2}\right].\\\intertext{We defined $M$ to be greater than or equal to $\max\lbrace 12\sqrt{3}, 16KE^{-\rfrac{1}{2}}\rbrace$, which gives us}
A-(1+\eta)B\geq \frac{E}{2}w',
\end{gather*}
for $\rfrac{M}{2}\leq r< M$.

On the region $r\geq M$, we have $\varphi ' = 0$, which reduces \eqref{GaShInequality} to
\begin{equation*}
A-(1+\eta)B\geq w'\left[E-V - \mathcal{W}V' - \frac{\mathcal{W}h^2q}{4r^2}\right].
\end{equation*} 
For $r\geq M$, $V\leq\rfrac{E}{4}$, $V'\leq c_0r^{-1}m(r)$ by \eqref{VCond6} and $q = 2r^{-1}-\mathcal{W}^{-1}$ by the definition of $q$. Therefore

\begin{equation*}
A-(1+\eta)B\geq w'\left[\frac{3E}{4}-\mathcal{W}\left(c_0r^{-1}m(r)+\frac{h^2}{2r^3}\right)\right].
\end{equation*}
By making the substitution $\mathcal{W} \leq \frac{Er^3}{4(c_0r^2m(r)+1)}$ we see that
\begin{equation*}
A-(1+\eta)B\geq \frac{E}{2}w'.
\end{equation*}
\end{proof}
\end{lemma}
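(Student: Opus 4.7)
The plan is to invoke the adapted Galkowski--Shapiro inequality \eqref{GaShInequality}, which expresses $A-(1+\eta)B$ as $w'$ times a pointwise expression in $\varphi'$, $\mathcal{W}$, $\Phi$, $V$, $V'$, $q$, and then verify on each of the four regions $[0,1]$, $[1,M/2]$, $[M/2,M)$, $[M,\infty)$ — on which $\varphi'$ and $\mathcal{W}$ take different explicit forms — that the bracketed expression is at least $E/2$.

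On $0\le r\le 1$ I would use $\varphi'=Kr^{-\delta/2}$, $\mathcal W=r/2$, which gives $q=0$ and $\Phi=-\delta/(2r)$, so that $(\varphi')^2(1+2\mathcal W\Phi -(1+\eta)\mathcal W\Phi^2\min(\mathcal W,h/4\varphi')) = K^2r^{-\delta}\bigl(1-\tfrac{\delta}{2}-(1+\eta)\tfrac{\delta^2}{16}\bigr)$; combining this with the bounds $|V|\le c_1r^{-\delta}$ and $\mathcal W|V'|\le \tfrac{1}{2}c_1r^{-\delta}$ from \eqref{VCond2}, \eqref{VCond5} and the lower bound on $K$ in \eqref{constantC} delivers the estimate. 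The range $0<\eta<(16-8\delta-\delta^2)/\delta^2$ is precisely what is required here (and uses $\delta<4(\sqrt{2}-1)$) to keep the coefficient $1-\delta/2-(1+\eta)\delta^2/16$ positive. On $1<r\le M/2$ I have $\varphi'=2K/(1+r)$, $\mathcal W=r/2$, $q=0$, $\Phi=-1/(1+r)$; here I would use the defining inequality for $b$ together with \eqref{VCond3}, \eqref{VCond6} to write $V+\tfrac{1}{2}rV'\le (y+c_0m)\mathbf 1_{[1,b]}+\tfrac{E}{4}\mathbf 1_{r\ge b}$, bound the small correction $(\varphi')^2\mathcal W\Phi^2\min(\mathcal W,h/4\varphi')$ by $Kh_0/27$, and invoke the second clause in \eqref{constantC} together with the assumption $h_0<27E/(4(1+\eta)K)$.

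On $M/2\le r<M$, with $\varphi'=\tfrac{8K}{M^2(1+M/2)}(M-r)^2$, $\Phi=-2/(M-r)$, $\mathcal W=r/2$, $q=0$, and since $r\ge M/2>b$ gives $V+\tfrac{1}{2}rV'\le E/4$, I would bound $(\varphi')^2(1+2\mathcal W\Phi)$ below by $-128K^2r(M-r)^3/(M^4(1+M/2)^2)$ and the other correction by $4Kh_0r/(M^2(1+M/2))$; the elementary estimate $r(M-r)^3\le M^4/16$ together with $r<M$ then produces a lower bound $w'[3E/4-54E/M^2-32K^2/M^2]$, and the definition $M\ge 2\max(6\sqrt{3},8KE^{-1/2})$ closes the case. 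Finally, on $r\ge M$ we have $\varphi'=0$ so \eqref{GaShInequality} collapses to $A-(1+\eta)B\ge w'[E-V-\mathcal W V'-\mathcal W h^2q/(4r^2)]$; here $V\le E/4$, $V'\le c_0r^{-1}m(r)$, and $q=2r^{-1}-\mathcal W^{-1}\le 2r^{-1}$, so using $\mathcal W\le Er^3/(4(c_0r^2m(r)+1))$ gives $\mathcal W(c_0r^{-1}m(r)+h^2/(2r^3))\le E/4$, yielding $A-(1+\eta)B\ge (E/2)w'$.

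The main obstacle is the interval $[0,1]$, where the singular behaviour of $V$, $V'$, $\varphi'$ and $\Phi$ must all conspire to leave a positive remainder after the Cauchy--Schwarz-type loss $(1+\eta)\mathcal W\Phi^2\min(\mathcal W,h/4\varphi')$; the sharpness of the algebraic constraint $16-8\delta-(1+\eta)\delta^2>0$ — equivalent to $\delta<4(\sqrt 2-1)$ once an admissible $\eta$ is chosen — is precisely what forces the hypothesis on $\delta$ in Theorem~\ref{MainTheorem}. The other cases are more mechanical, their only subtlety being to absorb the $h$-dependent correction from $\min(\mathcal W,h/4\varphi')$ by taking $h_0$ small (as in the quoted condition $h_0<27E/(4(1+\eta)K)$) and to balance the constant $K$ against the bounds on $y$, $m$, $c_0$, $c_1$ as encoded in \eqref{constantC}.
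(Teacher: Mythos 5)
Your proposal is correct and follows the paper's proof essentially verbatim: the same four-region case analysis under the adapted Galkowski--Shapiro inequality \eqref{GaShInequality}, the same explicit formulas for $\varphi'$, $\mathcal{W}$, $\Phi$, $q$ on each region, the same use of \eqref{constantC}, the definition of $b$, the smallness condition on $h_0$, and the elementary bound $r(M-r)^3\le M^4/16$, culminating in the same conclusion $A-(1+\eta)B\ge (E/2)w'$.
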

\section{Carleman Estimates}

To be able to prove Theorem \ref{MainTheorem}, we first give a Carleman estimate. We begin by proving the following lemma.

\begin{lemma}\label{w'Estimate}
There are constants $C,h_0>0$ that are independent of $h$ and $\varepsilon$ so that
\begin{equation}
\int_{r,\theta}w'(|u|^2+|hu'|^2)drd\theta\leq \frac{C}{h^2}\int_{r,\theta}\ang{r}^{2s}|\Pconj u|^2drd\theta + \frac{C\varepsilon}{h}\int_{r,\theta}|u|^2drd\theta.
\end{equation}
for all $\varepsilon > 0$ and $h\in (0,h_0]$, and for all $u\in r^{\frac{n-1}{2}}\TestFn{\R^n}$.

\emph{The proof of this lemma follows a similar argument to that can be found in the proof of Lemma 3.2 in \cite{LongRangeLipschitz}, but is adapted for the use of a weight function $w$ that is quadratic near the origin.}

\begin{proof}
Starting with \eqref{wFBoundBelow} and applying Lemma \ref{WeightAndPhase}, for $h\in(0,h_0]$ we get
\begin{multline*}
(wF(r))'\geq -\frac{2(1+\eta)w^2}{\eta h^2w'}\norm{\Pconj u(r,\cdot)}^2\mp 2\varepsilon w\Im\ang{u(r,\cdot),u'(r,\cdot)} \\+ \left( 1-\frac{1}{1+\eta}\left(1+\frac{\eta}{2}\right)\right)w'\norm{hu'(r,\cdot)}^2+\frac{E}{2}w'\norm{u(r,\cdot)}^2,
\end{multline*}
where the norm and inner product used in this inequality are those of the space $\Lp{2}{\S^{n-1}_{\theta}}$, and $\eta > 0$ depends on $\delta$, as implied by Lemma \ref{WeightAndPhase}.

Integrating the inequality above with respect to $r$ from $0$ to $\infty$ and using the fact that $wF, (wF)'\in\Lp{1}{0,\infty}$ and $w(0) = 0$, we get $\int_0^\infty(wF)'dr = 0$. From \eqref{defWeightPhase} and \eqref{mCondition} we see that $\mathcal{W}^{-1}\in\Lp{1}{(0,\infty)}$, which implies the boundedness of $w$. This, together with the fact that $\frac{w}{w'}\lesssim \ang{r}^{2s}$, gives us
\begin{equation*}
\int_{r,\theta}w'(|u|^2+|hu'|^2)\lesssim\frac{1}{h^2}\int_{r,\theta}\ang{r}^{2s}|\Pconj u|^2+2\varepsilon\int_{r,\theta}w|uu'|.
\end{equation*}
We now use the Cauchy-Schwarz inequality on $2\varepsilon\int_{r,\theta}w|uu'|$, which gives
\begin{equation}\label{directlyabove}
\int_{r,\theta}w'(|u|^2+|hu'|^2)\lesssim\frac{1}{h^2}\int_{r,\theta}\ang{r}^{2s}|\Pconj u|^2+\frac{\varepsilon}{h}\int_{r,\theta}w|u|^2+\frac{\varepsilon}{h}\int_{r,\theta}w|hu'|^2.
\end{equation}
Let $\beta = \min\lbrace\frac{h}{c\varepsilon}, 1\rbrace$ where $c > 0$ is the implicit constant in \eqref{directlyabove}. Then for $0<r<\beta$, we have $\frac{c\varepsilon}{h}w \leq \frac{1}{2}w'$. The term $\int_{r\leq \beta}w|hu'|^2$ can be subtracted from both sides of \eqref{directlyabove}, giving
\begin{equation}\label{penultimate}
\int_{r,\theta}w'(|u|^2+|hu'|^2)\lesssim\frac{1}{h^2}\int_{r,\theta}\ang{r}^{2s}|\Pconj u|^2+\frac{\varepsilon}{h}\int_{r,\theta}|u|^2+\frac{\varepsilon}{h}\int_{\substack{r,\theta\\ r\geq \beta}}|hu'|^2,
\end{equation}
where we have used the fact that $w$ is bounded.

By letting $\chi(r)\in\TestFn{[0,\beta)}$ with $\chi\equiv 1$ on $[0,\rfrac{\beta}{2}]$ and $\psi = 1-\chi$, we use \eqref{GeneralPConj2} to get 
\begin{multline*}
\Re\int_{r,\theta}(\Pconj\psi u)\overline{\psi u}=\int_{r,\theta}|h(\psi u)'|^2 + \Re\int_{r,\theta}2h\varphi'(\psi u)'\overline{\psi u} + \int_{r,\theta}(h^2r^{-2}\Lambda\psi u)\overline{\psi u} \\ + \int_{r,\theta}h\varphi''|\psi u|^2+\int_{r,\theta}(V-E-(\varphi')^2)|\psi u|^2
\end{multline*}
and
\begin{equation*}
\int_{r,\theta}h\varphi''|\psi u|^2 = -\Re\int_{r,\theta}2h\varphi'(\psi u)'\overline{\psi u}.
\end{equation*}
Using the facts that on $\supp\psi$, $|V-E-(\varphi ')^2|$ and $r^{-2}$ are bounded and $\ang{r}^{-2s}\lesssim w'$ and that $\Lambda\geq -\rfrac{1}{4}$ together with the two equations above, we have that, for all $h\in(0,1]$ and $\gamma>0$,
\begin{equation}\label{hu'}
\int_{\substack{r,\theta \\ r\geq \beta}}|hu'|^2\lesssim\int_{r,\theta}|u|^2 + \frac{\gamma}{2}\int_{r,\theta}w'|u|^2+\frac{1}{\gamma}\int_{r,\theta}\ang{r}^{2s}|\psi\Pconj u|^2+\frac{h^2}{\gamma}\int_{\substack{r,\theta\\\frac{\beta}{2}\leq r\leq \beta}}|hu'|^2.
\end{equation}
After substituting \eqref{hu'} into \eqref{penultimate}, we choose $\gamma > 0$ small enough, and then making $h_0$ sufficiently small, giving
\begin{equation*}
\int_{r,\theta}w'(|u|^2+|hu'|^2)\lesssim \frac{1}{h^2}\int_{r,\theta}\ang{r}^{2s}|\Pconj u|^2 +\frac{\varepsilon}{h}\int_{r,\theta}|u|^2.
\end{equation*}
\end{proof}
\end{lemma}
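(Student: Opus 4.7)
The plan is to start from the pointwise lower bound \eqref{wFBoundBelow} on the distributional derivative $(wF)'$ and apply Lemma \ref{WeightAndPhase} to replace the combination $A - (1+\eta)B$ by $\tfrac{E}{2}w'$. This yields
\[
(wF(r))' \geq -\tfrac{2(1+\eta)w^2}{\eta h^2 w'}\norm{\Pconj u}^2 \mp 2\varepsilon w\,\Im\ang{u,u'} + c_\eta w'\norm{hu'}^2 + \tfrac{E}{2}w'\norm{u}^2,
\]
with $c_\eta := 1 - (1+\eta)^{-1}(1+\eta/2) > 0$ and all norms and inner products taken in $\Lp{2}{\S^{n-1}_\theta}$. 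Integrating in $r$ over $(0,\infty)$, using that $u \in r^{(n-1)/2}\TestFn{\R^n}$ is compactly supported and that $w(r) = K_1 r^2$ near the origin (so $w(0) = 0$), the fundamental theorem of calculus gives $\int_0^\infty (wF)'\,dr = 0$, which turns the pointwise inequality into a global one.

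Next, I note that $w$ is bounded (since $\mc{W}^{-1} \in \Lp{1}{(0,\infty)}$ by \eqref{mCondition} and \eqref{defWeightPhase}) and that $w/w' = \mc{W} \leq \ang{r}^{2s}$, hence $w^2/w' \lesssim \ang{r}^{2s}$, which handles the coefficient in front of $|\Pconj u|^2$. For the $\varepsilon$ cross term I apply Cauchy-Schwarz with $h$-scaling, $2|u||u'| \leq h^{-1}|u|^2 + h|u'|^2$. The two reductions together give
\[
\int_{r,\theta} w'\bigl(|u|^2 + |hu'|^2\bigr) \lesssim \tfrac{1}{h^2}\int_{r,\theta} \ang{r}^{2s}|\Pconj u|^2 + \tfrac{\varepsilon}{h}\int_{r,\theta} w\bigl(|u|^2 + |hu'|^2\bigr).
\]
The $w|u|^2$ term on the right reduces to $|u|^2$ by boundedness of $w$, so only $\tfrac{\varepsilon}{h}\int w|hu'|^2$ remains to be absorbed.

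To absorb that last term, I split at $\beta := \min\{h/(c\varepsilon),1\}$, where $c$ is the implicit constant from the previous display. On $\{r < \beta\}$ the quadratic behaviour of $w$ near the origin yields $w/w' = r/2 \leq h/(2c\varepsilon)$, so $(c\varepsilon/h)w \leq w'/2$ and that part is subtracted into the left-hand side. For the tail $\{r \geq \beta\}$, I introduce $\psi = 1 - \chi$ with $\chi \in \TestFn{[0,\beta)}$ satisfying $\chi \equiv 1$ on $[0,\beta/2]$, and take the real part of the $\Lp{2}{\R^n,drd\theta}$ pairing of $\Pconj(\psi u)$ against $\psi u$. Expanding via \eqref{GeneralPConj2}, the $h\varphi''|\psi u|^2$ term cancels against $-\Re\int 2h\varphi'(\psi u)'\overline{\psi u}$ after integration by parts; using $\Lambda \geq -\tfrac14$, boundedness of $|V-E-(\varphi')^2|$ and $r^{-2}$ on $\supp\psi$, and $\ang{r}^{-2s} \lesssim w'$, I obtain inequality \eqref{hu'} involving a free parameter $\gamma > 0$. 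Substituting \eqref{hu'} back, I choose $\gamma$ small enough to absorb the $\gamma w'|u|^2$ contribution into the left-hand side, and then take $h_0$ small enough that the $h^2/\gamma$ commutator factor, which is localised to $\{\beta/2 \leq r \leq \beta\}$, is absorbed into $\int w'|hu'|^2$.

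The principal obstacle is this final absorption: because the cutoff $\chi$ depends on $\beta = \beta(h,\varepsilon)$, its derivatives carry $\beta^{-1}, \beta^{-2}$ factors, and the commutator $[h^2\partial_r^2,\chi]$ is of size $h^2/\beta^2$. The $h^2$ prefactor already built into \eqref{hu'} is precisely what tames this after multiplication by the $\varepsilon/h$ weight from the preceding step. Relative to \cite{LongRangeLipschitz}, the one genuinely new ingredient is that $w$ is quadratic (not linear) near zero, so that $w(0) = 0$ holds despite the singularity of $V$ there, while the ratio $\mc{W} = r/2$ remains small enough at the origin to make the small-$r$ absorption $(c\varepsilon/h)w \leq w'/2$ succeed.
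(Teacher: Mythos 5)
Your proposal is correct and follows essentially the same route as the paper's proof: start from \eqref{wFBoundBelow} plus Lemma \ref{WeightAndPhase}, integrate $(wF)'$ to zero using $w(0)=0$, bound the $\Pconj$ coefficient via boundedness of $w$ and $w/w' = \mc{W} \le \ang{r}^{2s}$, apply $h$-scaled Cauchy--Schwarz to the $\varepsilon$-term, split at $\beta = \min\{h/(c\varepsilon),1\}$ to absorb the small-$r$ part into $w'$, and control the tail $\int_{r\ge\beta}|hu'|^2$ with the cutoff $\psi$, the $h\varphi''$ cancellation, $\Lambda \ge -\tfrac14$, and a final smallness choice of $\gamma$ and $h_0$. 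Your extra remarks on the $h^2/\beta^2$ size of the commutator and on the role of the quadratic $w$ near $r=0$ are accurate clarifications, not deviations.
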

We are now in a position to prove the Carleman estimate.

\begin{lemma}\label{Carleman}
There are constants $C_1, C_2 > 0$ independent of $h$ and $\varepsilon$ such that
\begin{gather}\label{1stCarleman}
\norm{\ang{r}^{-s}\ind{\leq M}v}_{\Lp{2}{\R^n}}^2\leq e^{\rfrac{C_1}{h}}\left(\norm{\ang{r}^{s}(P-E\pm i\varepsilon)v}_{\Lp{2}{\R^n}}^2 + \varepsilon\norm{v}_{\Lp{2}{\R^n}}^2\right),\\\label{2ndCarleman}
\norm{\ang{r}^{-s}\ind{\geq M}v}_{\Lp{2}{\R^n}}^2\leq \frac{C_2}{h^2}\norm{\ang{r}^{s}(P-E\pm i\varepsilon)v}_{\Lp{2}{\R^n}}^2 + \frac{C_2\varepsilon}{h}\norm{v}_{\Lp{2}{\R^n}}^2.
\end{gather}
for all $\varepsilon > 0$ and $h\in (0,h_0]$, and for all $v\in\TestFn{\R^n}$. 

\emph{
Here, the measure used to define the $L^2$ norms in use in \eqref{1stCarleman} and \eqref{2ndCarleman} is the Lebesgue measure on $\R^n$.
}

\begin{proof}
We begin with the proof by showing the inequality \eqref{1stCarleman} holds. We start by defining $u = r^{\frac{n-1}{2}}v$ where $v\in\TestFn{\R^n}$ then continue by separately considering $u$ in some small region near the origin and $u$ away from the origin, by writing
\begin{equation}\label{Split}
\int_{r,\theta}|\ang{r}^{-s}u|^2drd\theta = \int_{\substack{r,\theta\\0\leq r\leq a}}|\ang{r}^{-s}u|^2drd\theta + \int_{\substack{r,\theta\\r\geq a}}|\ang{r}^{-s}u|^2drd\theta.
\end{equation}
The second term, which describes $u$ away from the origin, can be estimated as follows 
\begin{equation}\label{1stTerm} 
\int_{\substack{r,\theta\\r\geq a}}|\ang{r}^{-s}u|^2\lesssim a^{-1}\int_{r,\theta}w'|u|^2
\end{equation}
for some constant $C>0$ that is independent of $h$ and $\varepsilon$. We have dropped the $drd\theta$ for ease of notation. As for the first term, we have the estimate
\begin{equation}\label{2ndTerm}
\int_{\substack{r,\theta\\ 0\leq r\leq a}}|\ang{r}^{-s}u|^2\lesssim a^{1+2t_0}\int_{\substack{r,\theta\\ 0\leq r\leq a}}|r^{-\rfrac{1}{2}-t_0}u|^2,
\end{equation}
where $t_0\in (-\rfrac{1}{2},0)$. Combining \eqref{1stTerm} and \eqref{2ndTerm} with \eqref{Split} gives
\begin{equation}\label{BothTerms}
\int_{r,\theta}|\ang{r}^{-s}u|^2 \lesssim a^{-1}\int_{r,\theta}w'|u|^2 + a^{1+2t_0}\int_{\substack{r,\theta\\ 0\leq r\leq a}}|r^{-\rfrac{1}{2}-t_0}u|^2.
\end{equation}
We now look towards Lemma \ref{lemNearOriginEstimate} to turn \eqref{BothTerms} into an estimate in terms of $(P-E\pm i\varepsilon)u$ and $u$, 
\begin{equation}\label{Pestimate}
\norm{r^{\rfrac{3}{2}-t_0}r^{\rfrac{n-1}{2}}(P-E\pm i\varepsilon){r^{-\rfrac{n-1}{2}}}u}_{\Lp{2}{\Ann{0}{2\alpha_1}}}^2\lesssim \int_{r,\theta}|\ang{r}^{s}\Pconj(e^{\rfrac{\varphi}{h}}u)|^2.
\end{equation}
Next, we make use of \eqref{VCond2} and that $w'\sim r$ on $\Ann{a}{2\alpha_1}$ to arrive at
\begin{equation}
\norm{r^{\rfrac{3}{2}-t_0}(V+E\pm i\varepsilon)u}_{\Lp{2}{\Ann{a}{2\alpha_1}}}^2\lesssim (1+a^{2-2t_0-2\delta})\int_{r,\theta}w'\left(|e^{\rfrac{\varphi}{h}}u|^2+|h(e^{\rfrac{\varphi}{h}}u)'|^2\right).
\end{equation}
Furthermore,since $t_0<0$, we have
\begin{align}\label{hu'estimate}
\norm{r^{\rfrac{3}{2}-t_0}u}_{\Lp{2}{\Ann{\alpha_1}{2\alpha_1}}}^2+\norm{r^{\rfrac{3}{2}-t_0}hu'}_{\Lp{2}{\Ann{\alpha_1}{2\alpha_1}}}^2&\lesssim \int_{r,\theta}w'\left(|e^{\rfrac{\varphi}{h}}u|^2+|h(e^{\rfrac{\varphi}{h}}u)'|^2\right).
\end{align}
From Lemma \ref{lemNearOriginEstimate} we have
\begin{multline}\label{AfterLemma2.2}
a^{1+2t_0}\int_{\substack{r,\theta\\ 0\leq r\leq a}}|r^{-\rfrac{1}{2}-t_0}u|^2\leq C\Upsilon (t_0)^2a^{1+2t_0}h^{-4}\left(\norm{r^{\rfrac{3}{2}-t_0}r^{\rfrac{n-1}{2}}(P-E\pm i\varepsilon){r^{-\rfrac{n-1}{2}}}u}_{\Lp{2}{\Ann{0}{2\alpha_1}}}^2\right. \\
\left. \quad+ \norm{r^{\rfrac{3}{2}-t_0}(V-E\pm i\varepsilon )u}_{\Lp{2}{\Ann{a}{2\alpha_1}}}^2\right.\\
+ \left. h\norm{r^{\rfrac{3}{2}-t_0}hu'}_{\Lp{2}{\Ann{\alpha_1}{2\alpha_1}}}^2 + h\norm{r^{\rfrac{3}{2}-t_0}hu}_{\Lp{2}{\Ann{\alpha_1}{2\alpha_1}}}^2\right).
\end{multline}
For small enough $h_0>0$, $a \sim h^{\frac{2}{2-\delta}}$ for $h\in (0,h_0]$, therefore $h^{-4}\sim a^{-2(2-\delta)}$, so $a^{1+2t_0}h^{-4}\sim a^{-3+2t_0-2\delta}$. Substituting inequalities \eqref{Pestimate} to \eqref{hu'estimate} into \eqref{AfterLemma2.2} allows us to obtain 
\begin{multline}\label{midway}
a^{1+2t_0}\int_{\substack{r,\theta\\ 0\leq r\leq a}}|r^{-\rfrac{1}{2}-t_0}u|^2\lesssim a^{-3+2t_0+2\delta}\int_{r,\theta}|\ang{r}^{s}\Pconj(e^{\rfrac{\varphi}{h}}u)|^2 \\+ (a^{-3+2t_0+2\delta}+a^{-1})\int_{r,\theta}w'\left(|e^{\rfrac{\varphi}{h}}u|^2+|h(e^{\rfrac{\varphi}{h}}u)'|^2\right).
\end{multline}
Because $t_0 > -\frac{1}{2}$ and $\delta\geq 0$, we get $a^{-3+2t_0+2\delta}\leq a^{-4}$ and therefore 
\begin{equation*}
a^{1+2t_0}\int_{\substack{r,\theta\\ 0\leq r\leq a}}|r^{-\rfrac{1}{2}-t_0}u|^2\lesssim a^{-4}\left(\int_{r,\theta}|\ang{r}^{s}\Pconj(e^{\rfrac{\varphi}{h}}u)|^2 +\int_{r,\theta}w'\left(|e^{\rfrac{\varphi}{h}}u|^2+|h(e^{\rfrac{\varphi}{h}}u)'|^2\right)\right).
\end{equation*}
From here, we make use of \eqref{BothTerms} followed by the substitution $a\sim h^{\frac{2}{2-\delta}}$ and arrive at
\begin{equation*}
\int_{r,\theta}|\ang{r}^{-s}u|^2 \lesssim h^{-\frac{8}{2-\delta}}\left(\int_{r,\theta}|\ang{r}^{s}\Pconj(e^{\rfrac{\varphi}{h}}u)|^2 +\int_{r,\theta}w'\left(|e^{\rfrac{\varphi}{h}}u|^2+|h(e^{\rfrac{\varphi}{h}}u)'|^2\right)\right).
\end{equation*} 
We now use Lemma \ref{w'Estimate} and a substitution $u = r^{\rfrac{n-1}{2}}v$ to arrive at \eqref{1stCarleman}.

To show \eqref{2ndCarleman}, we use the observation that $\ang{r}^{-2s}\lesssim w'$ for $r\geq M$, which implies
\begin{equation*}
\int_{\substack{r,\theta\\ r\geq M}}|\ang{r}^{-s}u|^2\lesssim\int_{r,\theta}w'(|u|^2+|hu'|^2),
\end{equation*}
then the use of Lemma \ref{w'Estimate} yields
\begin{equation*}
\int_{\substack{r,\theta\\ r\geq M}}|\ang{r}^{-s}u|^2 \lesssim\frac{1}{h^2}\int_{r,\theta}|\ang{r}^{s}\Pconj u|^2 + \frac{\varepsilon}{h}\int_{r,\theta}|u|^2
\end{equation*}
Making the substitution $u\mapsto e^{\frac{\varphi}{h}}r^{\frac{n-1}{2}}v$ gives
\begin{equation*}
\int_{\substack{r,\theta\\ r\geq M}}|\ang{r}^{-s}v|^2e^{\frac{2\varphi}{h}}r^{n-1} \lesssim e^{\frac{C_\varphi}{h}}\left(\frac{1}{h^2}\int_{r,\theta}|\ang{r}^{s}(P+E\pm i\varepsilon)v|^2r^{n-1} + \frac{\varepsilon}{h}\int_{r,\theta}|v|^2r^{n-1}\right)
\end{equation*}
where $C_\varphi := 2\max\varphi$. Furthermore, $2\varphi (r) = C_\varphi$ for $r\geq M$ because $\varphi ' \geq 0$ for all $r>0$ and $\varphi ' = 0$ for $r\geq M$. Dividing through by $e^{\frac{C_\varphi}{h}}$ gives us \eqref{2ndCarleman}.
\end{proof}
\end{lemma}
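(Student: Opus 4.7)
The plan is to reduce both inequalities to bounds on the auxiliary function $u := r^{(n-1)/2}v$ and exploit the interplay between Lemma \ref{w'Estimate} (which controls $w'$-weighted integrals of $u$ and $hu'$ against the conjugated operator $\Pconj$) and Lemma \ref{lemNearOriginEstimate} (which controls $u$ near the origin in terms of $(P-E\pm i\varepsilon)v$). The conjugation identity $\Pconj = e^{\varphi/h}r^{(n-1)/2}(P-E\pm i\varepsilon)r^{-(n-1)/2}e^{-\varphi/h}$ from \eqref{GeneralPConj} means every bound phrased in $u$ and $\Pconj u$ translates to one in $v$ and $(P-E\pm i\varepsilon)v$ after the substitution $u = e^{\varphi/h}r^{(n-1)/2}v$, at the cost of the ratio between the maximum and minimum of $e^{2\varphi/h}$ on the relevant region.

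For the exterior estimate \eqref{2ndCarleman}, the construction of $\mathcal{W}$ in \eqref{defWeightPhase} forces $\mathcal{W}\leq \ang{r}^{2s}$ for $r>M$, hence $w'\geq w/\ang{r}^{2s}\gtrsim \ang{r}^{-2s}\ind{r\geq M}$ since $w$ is bounded below on $\{r\geq M\}$. Lemma \ref{w'Estimate} then immediately produces $\int_{r\geq M}\ang{r}^{-2s}|u|^2 \lesssim h^{-2}\int \ang{r}^{2s}|\Pconj u|^2 + (\varepsilon/h)\int |u|^2$. Setting $u = e^{\varphi/h}r^{(n-1)/2}v$, the key observation is that $\varphi$ is constant equal to $C_\varphi/2$ on $\{r\geq M\}$, so the exponential factor on the left is exactly $e^{C_\varphi/h}$ and cancels the identical upper bound for $e^{2\varphi/h}$ on the right. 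The $\varepsilon/h$ term remains bounded because $2\varphi\leq C_\varphi$ everywhere, yielding \eqref{2ndCarleman}.

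For the interior estimate \eqref{1stCarleman} the obstruction is that $w'=2K_1 r$ vanishes at the origin, so Lemma \ref{w'Estimate} alone cannot recover $\ang{r}^{-2s}|u|^2$ on a neighbourhood of $r=0$. I would split $\int_{r\leq M}|\ang{r}^{-s}u|^2$ into integrals over $\{r\leq a\}$ and $\{a\leq r\leq M\}$, where $a\sim h^{2/(2-\delta)}$ is the parameter appearing in Lemma \ref{lemNearOriginEstimate}. On $\{a\leq r\leq M\}$, $w'\gtrsim a$, so the piece is bounded by $a^{-1}\int w'|u|^2$, directly controlled by Lemma \ref{w'Estimate}. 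On $\{r\leq a\}$, fixing $t_0\in(-\tfrac12,\min(0,\tfrac32-\delta))$ and using $r^{1+2t_0}\leq a^{1+2t_0}$ there, I bound this piece by $a^{1+2t_0}\int_{r\leq a}|r^{-1/2-t_0}u|^2$, to which Lemma \ref{lemNearOriginEstimate} applies. The four terms on the right of \eqref{NearOriginEstimate} are then handled as follows: the $P$-term converts to $\Pconj$ up to a bounded exponential factor on the compact region $\Ann{0}{2\alpha_1}$; the $V$-term uses \eqref{VCond2} together with $w'\sim r$ on $\Ann{a}{2\alpha_1}$ to absorb the power $a^{2-2t_0-2\delta}$ into a $w'$-weighted integral; and the $hu, hu'$ boundary terms on $\Ann{\alpha_1}{2\alpha_1}$ are absorbed similarly since $w'\gtrsim 1$ there. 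A second application of Lemma \ref{w'Estimate} then closes the estimate.

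The main obstacle is bookkeeping the polynomial $h^{-1}$ losses and absorbing them into the desired exponential. The factor $a^{-4}\sim h^{-8/(2-\delta)}$ from the near-origin analysis combines with the $h^{-2}$ from Lemma \ref{w'Estimate}, and after undoing the conjugation $u = e^{\varphi/h}r^{(n-1)/2}v$ on $\{r\leq M\}$ (where $\varphi$ is \emph{not} constant), one also pays the ratio $e^{C_\varphi/h}$ between the extreme values of $e^{2\varphi/h}$. All polynomial powers of $h^{-1}$ are dominated by $e^{C_1/h}$ for some $C_1\geq C_\varphi$ depending only on $\delta$, $s$, and the implicit constants of the two lemmas, producing \eqref{1stCarleman}. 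A subtle point to watch is ensuring that the choice $t_0\in(-\tfrac12,\min(0,\tfrac32-\delta))$ is compatible both with the hypothesis of Lemma \ref{lemNearOriginEstimate} and with the integrability of $r^{-1-2t_0}$ at zero, which forces $t_0>-\tfrac12$ strictly.
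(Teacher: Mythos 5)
Your proposal is correct and follows essentially the same strategy as the paper: define $u = r^{(n-1)/2}v$, split the weighted integral at $r=a$, invoke Lemma \ref{lemNearOriginEstimate} on $\{r\leq a\}$ and a direct $w'$-lower bound on $\{r\geq a\}$, control every resulting term via Lemma \ref{w'Estimate}, and pay the factor $e^{C_\varphi/h}$ when undoing the conjugation (which cancels for $r\geq M$ because $\varphi$ is constant there). The only cosmetic difference is that you restrict the split to $\{r\leq M\}$ while the paper actually bounds the full weighted integral before specialising, and you reuse the symbol $u$ for both $r^{(n-1)/2}v$ and $e^{\varphi/h}r^{(n-1)/2}v$; neither affects the argument.
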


\section{Resolvent Estimates}
The goal of this section is to prove Theorem \ref{MainTheorem}. This proof follows the same argument as used in Section 5 of \cite{LongRangeLipschitz}.

\begin{proof}
Since increasing $s$ only decreases the weighted resolvent norms found in \eqref{GeneralFirstEstimate} and \eqref{GeneralSecondEstimate}, we can let $\rfrac{1}{2}<s<1$ without any loss of generality. Lemma \ref{Carleman} gives us $C_1,C_2,h_0>0$ such that
\begin{equation}\label{StartOfMainProof}
e^{-\rfrac{C_1}{h}}\norm{\ang{r}^{-s}\ind{\leq M}u}_{\Lp{2}{\R^n}}^2+\norm{\ang{r}^{-s}\ind{\geq M}u}_{\Lp{2}{\R^n}}^2\leq\frac{C_2}{h^2}\norm{\ang{r}^{s}(P-E\pm i\varepsilon)u}_{\Lp{2}{\R^n}}^2+\frac{C_2\varepsilon}{h}\norm{u}_{\Lp{2}{\R^n}}^2,
\end{equation}
for all $\varepsilon\geq 0$ and $h\in (0,h_0]$, and all $u\in\TestFn{\R^n}$. For any $\gamma, \gamma_0 > 0,$
\begin{align}\nonumber
2\varepsilon\norm{u}_{\Lp{2}{\R^n}}^2 &= -2\Im\ang{(P-E\pm i\varepsilon)u,u}_{{\Lp{2}{\R^n}}}\\\nonumber
&\leq\gamma^{-1}\norm{\ang{r}^s\ind{\leq M}(P-E\pm i\varepsilon)u}_{\Lp{2}{\R^n}}^2+\gamma\norm{\ang{r}^{-s}\ind{\leq M}u}_{\Lp{2}{\R^n}}^2\\\label{epsilon_u}&+\gamma_0^{-1}\norm{\ang{r}^s\ind{\geq M}(P-E\pm i\varepsilon)u}_{\Lp{2}{\R^n}}^2+\gamma_0\norm{\ang{r}^{-s}\ind{\geq M}u}_{\Lp{2}{\R^n}}^2.
\end{align}
We set $\gamma = h\rfrac{e^{-\rfrac{C_1}{h}}}{C_2}$ and $\gamma_0 = \rfrac{h}{C_2}$. Inequalities \eqref{StartOfMainProof} and \eqref{epsilon_u} imply, for some $C > 0$, all $\varepsilon\geq 0$, $h\in (0,h_0]$ and $u\in\TestFn{\R^n}$,
\begin{multline}\label{AlmostFinal}
e^{-\rfrac{C}{h}}\norm{\ang{r}^{-s}\ind{\leq M}u}_{\Lp{2}{\R^n}}^2+\norm{\ang{r}^{-s}\ind{\geq M}u}_{\Lp{2}{\R^n}}^2 \leq e^{\rfrac{C}{h}}\norm{\ang{r}^s\ind{\leq M}(P-E\pm i\varepsilon)u}_{\Lp{2}{\R^n}}^2\\+\frac{C}{h^2}\norm{\ang{r}^s\ind{\geq M}(P-E\pm i\varepsilon)u}_{\Lp{2}{\R^n}}^2.
\end{multline}
The final task is to use \eqref{AlmostFinal} to deduce
\begin{multline}\label{Final}
e^{-\rfrac{C}{h}}\norm{\ang{r}^{-s}\ind{\leq M}(P-E\pm i\varepsilon)^{-1}\ang{r}^{-s}f}_{\Lp{2}{\R^n}}^2+\norm{\ang{r}^{-s}\ind{\geq M}(P-E\pm i\varepsilon)^{-1}\ang{r}^{-s}f}_{\Lp{2}{\R^n}}^2\\\leq e^{\rfrac{C}{h}}\norm{\ind{\leq M}f}_{\Lp{2}{\R^n}}^2 + \frac{C}{h^2}\norm{\ind{\geq M}f}_{\Lp{2}{\R^n}}^2,
\end{multline}
for $\varepsilon> 0, h\in (0,h_0], f\in\Lp{2}{\R^n}$, from which Theorem \ref{MainTheorem} follows. We require a Sobolev space estimate followed by the application of a density argument that relies on \eqref{AlmostFinal}.

The operator
\[ [P,\ang{r}^s]\ang{r}^{-s} = (-h^2\Delta\ang{r}^s-2h^2(\nabla\ang{r}^s)\cdot\nabla)\ang{r}^{-s} \]
is bounded $H^2\to L^2$, so, for all $u\in H^2(\R^n)$ such that $\ang{r}^su\in H^2(\R^n)$,
\begin{align}\nonumber
\norm{\ang{r}^s(P-E\pm i\varepsilon)u}_{\Lp{2}{\R^n}}&\leq\norm{(P-E\pm i\varepsilon)\ang{r}^su}_{\Lp{2}{\R^n}} + \norm{[P,\ang{r}^s]\ang{r}^{-s}\ang{r}^su}_{\Lp{2}{\R^n}}\\\label{5.5}
&\leq C_{\varepsilon, h}\norm{\ang{r}^su}_{H^2(\R^n)},
\end{align}
for some constant $C_{\varepsilon, h}$ depending on $\varepsilon$ and $h$. Given $f\in\Lp{2}{\R^n}$, the function $u=\ang{r}^s(P-E\pm i\varepsilon)^{-1}\ang{r}^{-s}f\in H^2(\R^n)$ because $u=(P-E\pm i\varepsilon)^{-1}(f-w)$, where $w = \ang{r}^s[P,\ang{r}^{-s}]\ang{r}^s\ang{r}^{-s}u$ is $L^2$ because the operator $\ang{r}^s[P,\ang{r}^{-s}]\ang{r}^s$ is bounded $H^2\to L^2$ since $s<1$ and $\ang{r}^{-s}u=(P-E\pm i\varepsilon)^{-1}f$ is in $H^2$.

Now, choose a sequence $u_k\in\TestFn{\R^n}$ such that $u_k\to \ang{r}^s(P-E\pm i\varepsilon)^{-1}\ang{r}^{-s}f$ in $H^2(\R^n)$. Define $\tilde{u}_k:=\ang{r}^{-s}u_k$. Then, as $k\to\infty$,
\begin{equation}
\norm{\ang{r}^{-s}\tilde{u}_k-\ang{r}^{-s}(P-E\pm i\varepsilon)^{-1}\ang{r}^{-s}f}_{\Lp{2}{\R^n}}\leq\norm{u_k-\ang{r}^s(P-E\pm i\varepsilon)^{-1}\ang{r}^{-s}f}_{\Lp{2}{\R^n}}\to 0.
\end{equation}
Also, applying \eqref{5.5} gives
\begin{equation}
\norm{\ang{r}^s(P-E\pm i\varepsilon)\tilde{u}_k-f}_{\Lp{2}{\R^n}}\leq C_{\varepsilon, h}\norm{u_k-\ang{r}^s(P-E\pm i\varepsilon)^{-1}\ang{r}^{-s}f}_{H^2(\R^n)}\to 0.
\end{equation}
We then replace $u$ by $\tilde{u}_k$ in \eqref{AlmostFinal} and send $k\to\infty$ to attain \eqref{Final}.
\end{proof}




\nocite*{}
\bibliographystyle{alpha}
\bibliography{References}
\end{document}